\documentclass[journal]{IEEEtran}
\usepackage{cite}
\usepackage{url}
\usepackage{empheq}
\usepackage{float}
\usepackage{dsfont}
\usepackage{amsthm}
\usepackage{algorithm}
\usepackage{algpseudocode}

\usepackage{amsmath, amssymb, amsfonts}
\allowdisplaybreaks[4]
\usepackage{graphicx}
\usepackage{textcomp}
\usepackage{enumerate}
\usepackage{epstopdf}
\usepackage{array}
\usepackage{booktabs}
\usepackage{subfigure}
\usepackage{multirow}
\usepackage{soul, color}
\usepackage[dvipsnames]{xcolor}
\usepackage[latin1]{inputenc}

\newtheorem{lemma}{Lemma}
\newtheorem{corollary}{Corollary}
\newtheorem{proposition}{Proposition}

\newtheorem{claim}{Claim}

\soulregister\cite{7} % Õë¶Ô\citeÃüÁî
\soulregister\citep7 % Õë¶Ô\citepÃüÁî
\soulregister\citet7 % Õë¶Ô\citetÃüÁî
\soulregister\ref{7} % Õë¶Ô\refÃüÁî
\soulregister\pageref{7} % Õë¶Ô\pagerefÃüÁî

\usepackage{nomencl}
\makenomenclature
\usepackage{etoolbox}
\renewcommand{\nomgroup}[1]{%
\item[\bfseries \ifstrequal{#1}{A}{Acronyms}{%
      \ifstrequal{#1}{S}{Symbols}{%
    \ifstrequal{#1}{U}{Units}{}}}%
]}

\begin{document}
\title{Forecast-Enhanced Lyapunov Optimization for Real-Time EV Charging Scheduling}

\author{ Shihan Huang,
Yue Chen,~\IEEEmembership{Senior Member,~IEEE},
Richard Chen,
Adam Wierman
%Steven H. Low,~\IEEEmembership{Fellow,~IEEE}
%\thanks{This work was supported by the National Natural Science Foundation of China under Grant No. 52307144. (Corresponding to Y. Chen)}
\thanks{S. Huang and Y. Chen are with the Department of Mechanical and Automation Engineering, The Chinese University of Hong Kong, Hong Kong, China (e-mail: shhuang@link.cuhk.edu.hk, yuechen@mae.cuhk.edu.hk).}
\thanks{R. Chen and A. Wierman are with the Computing and Mathematical Sciences Department, California Institute of Technology, Pasadena, CA 91125 USA (e-mail: rachen@caltech.edu, adamw@caltech.edu).}
}
\markboth{Journal of \LaTeX\ Class Files,~Vol.~XX, No.~X, Feb.~2019}%
{Shell \MakeLowercase{\textit{et al.}}: Bare Demo of IEEEtran.cls for
IEEE Journals}

\maketitle

\begin{abstract}
Electric vehicles (EVs) play a vital role in achieving carbon neutrality. Various approaches have been developed for online optimal EV charging scheduling to maximize their environmental and economic benefits. Among them, Lyapunov optimization has gained wide adoption due to its ease of implementation, no need for predictions, and rigorous performance guarantees. However, this prediction-free nature also limits the performance of Lyapunov optimization, as it cannot fully leverage the relatively accurate short-term forecasts often available in practice. To overcome this limitation, this paper proposes a forecast-enhanced Lyapunov optimization method for real-time EV charging scheduling. Specifically, we design novel virtual queues and embed the traditional Lyapunov optimization within a receding horizon control framework to incorporate short-term predictions. The proposed algorithm is further extended by introducing heterogeneous penalty parameters to reduce the optimality gap. We prove that the proposed algorithm achieves bounded charging delay and a bounded optimality gap between online and offline solutions, both depending on the prediction window length. Numerical experiments demonstrate that the proposed method reduces operational costs compared to the traditional prediction-free Lyapunov optimization algorithm, while still satisfying all charging requirements. 
\end{abstract}

% Note that keywords are not normally used for peerreview papers.
\begin{IEEEkeywords}
Electric vehicle, Lyapunov optimization, optimal charging, short-term forecasts, online algorithm
\end{IEEEkeywords}

\IEEEpeerreviewmaketitle

\section*{Nomenclature}
\addcontentsline{toc}{section}{Nomenclature}

% \subsection{Acronyms}
% \begin{IEEEdescription} [\IEEEusemathlabelsep
% \IEEEsetlabelwidth{$E_v^{\mathrm{int}}$, $E_v^{\mathrm{dep}}$}]
% \item [EV] {Electric vehicle.}
% \item [MPC] {Model predictive control.}
% \item [SOC] {State of charge.}
% \item [RL] {Reinforcement learning.}
% % \item [LSTM] {Long short-term memory}
% % \item [GRU] {Gated recurrent unit}
% % \item [CNN] {Convolutional neural network}
% \end{IEEEdescription}

\subsection{Sets}
\begin{IEEEdescription}[\IEEEusemathlabelsep
\IEEEsetlabelwidth{$E_v^{\mathrm{int}}$, $E_v^{\mathrm{dep}}$}]
\item [$\mathcal{S}$] {Set of all EVs.}
\item [$\mathcal{S}_g$] {Set of all EVs of group $g$.}
\item [$\mathcal{S}_g^{\mathrm{act}}$] {Set of active EVs of group $g$.}
\item [$\mathcal{U}_v$] {Set of time slots when EV $v$ is available for charging.}
\end{IEEEdescription}

\subsection{Parameters}
\begin{IEEEdescription}[\IEEEusemathlabelsep
\IEEEsetlabelwidth{$E_v^{\mathrm{int}}$, $E_v^{\mathrm{dep}}$}]
\item [$a_v, a_g$] {Arrival rate of charging tasks of EV $v$ and group $g$.}
\item [$D_g$] {Maximum charging delay of EVs in group $g$.}
% \item [$E_v^{\mathrm{a}}, E_v^{\mathrm{d}}$] {Energy levels of EV $v$ upon arrival and departure.}
\item [$E_v^{\mathrm{req}}$] {Required energy for charging of EV $v$.}
\item [$E_v^{\max}$] {Maximum energy that EV $v$ can be charged.}
\item [$E_v^{\mathrm{int}}$, $E_v^{\mathrm{dep}}$] {Initial energy level and the desired energy level at departure of EV $v$.}
\item [$E_v^{\mathrm{cap}}$] {Battery capacity of EV $v$.}
\item [$G$] {Number of groups of EVs.}
\item [$P_v$] {Maximum charging power of EV $v$.}
\item[$\underline{X}_g^{\mathcal{U}}, \overline{X}_g^{\mathcal{U}}$] {Minimum and maximum charging energy of group $g$ during time horizon $\mathcal{U}$.}
% \item [$P_v^{\mathrm{end}}$] {Charging power of EV $v$ in the last time slot of charging.}
\item [$Q_g, Z_g$] {Maximum backlogs of group $g$'s virtual queues.}
\item [$R_g$] {Maximum allowable charging delay of EVs in group $g$.}
\item [$T$] {Number of time slots and the duration of each time slot is $\Delta t$.}
\item [$T_v^{\mathrm{a}}, T_v^{\mathrm{d}}$] {Arrival and departure time of EV $v$.}
\item [$V, V_g$] {Homogeneous and heterogeneous penalty parameters in Lyapunov optimization.}
\item [$w$] {Length of the lookahead window.}
\item [$\alpha_g$] {Weighting parameter in delay-aware virtual queue.}
%\item [$\Delta t$] {Duration of a time slot.}
\item [$\eta$] {Charging efficiency.}
\item [$\pi(t)$] {Electricity price at time $t$.}
\end{IEEEdescription}

\subsection{Variables}
\begin{IEEEdescription}[\IEEEusemathlabelsep \IEEEsetlabelwidth{$E_v^{\mathrm{int}}$, $E_v^{\mathrm{dep}}$}]
% \item [$e_v, e_g$] {Energy level of EV $v$ and group $g$.}
\item [$p_v(t)$] {Charging power of EV $v$ at time $t$.}
\item [$q_g(t), z_g(t)$] {Virtual queues of group $g$ at time $t$.}
\item [$x_g(t)$] {Total charging power of group $g$ at time $t$.}
\end{IEEEdescription}

\section{Introduction}

% **The Imperative for Real-Time Scheduling in High-Uncertainty
% Environments**

As a major pathway to decarbonization,
the electrification of transportation has been accelerated by the
synergy of regulatory mandates and technological progress, leading to
a rapid increase in the global
stock of electric vehicles (EVs). The global market share of EVs has
exceeded 20\% in 2024 according to the International Energy Agency
\cite{Global2025}.
However, the growing EV charging demand, if left unmanaged, poses significant challenges to power system operation, potentially causing network congestion or even cascading failures \cite{LiImpact2024}. Effective EV charging scheduling is therefore essential, yet it remains challenging due to numerous underlying uncertainties, such as random EV arrivals, variable renewable generation, and volatile electricity prices. Since accurate forecasts of these uncertainties are rarely available in practice, the development of an online algorithm for real-time EV charging scheduling becomes indispensable.
Existing online optimization algorithms for real-time EV charging scheduling can be broadly categorized into two classes: \emph{prediction-based} ones and \emph{prediction-free} ones.
Model predictive control (MPC) is a classic prediction-based approach that
uses an explicit system model and forecasts of uncertainty at each time step  to solve an optimization problem over a finite lookahead window.
MPC has been applied in the research on EV charging scheduling \cite{McCloneHybrid2024}, demand charge management \cite{Cortes-AguirreEconomic2025}, EV power flexibility \cite{WangMPCBased2022}, etc.
A robust MPC approach was developed in \cite{YangRobust2023} to  avoid missing service by one-step worst-case prediction.
% A novel co-regionalised Gaussian process model was developed to forecast PV power generation in \cite{GanDataDriven2021}, which improved the performance of MPC compared to local Gaussian Process.
A block MPC approach was developed in \cite{YangEV2024} to minimize the demand charge on the scheduling of deferrable EV charging demand.
However, MPC suffers from two limitations: First, its performance is dependent on the accuracy of the multi-step forecasts and substantial prediction errors can significantly degrade performance. Second, MPC can be computationally intensive and may not be scalable for a large system as the dimensions of states and inputs grow.

To overcome the shortcomings of prediction-based approaches (e.g., MPC), prediction-free methods, which do not rely on explicit forecasts of future uncertainties, have  been  explored as promising alternatives. Reinforcement learning (RL) is one such example. 
An RL agent learns a control policy by exploration and exploitation guided by a reward signal.
RL offers a model-free approach to real-time scheduling, such as aggregate flexibility feedback \cite{LiLearningBased2021} and pricing schemes \cite{AdetunjiTwoTailed2024}.
Reference \cite{LiLearningBased2021} proposed an RL method for characterizing the real-time aggregate power flexibility of a group of EVs. 
% The problem is typically
% formulated as a Markov decision process, and algorithms such as
% deep Q-networks (DQN) are employed to learn complex, non-linear
% relationships between system states and optimal actions.
A two-stage learning-based framework was proposed in \cite{ZhangEV2024}, in which RL was used to allocate the commands of the grid. The competition among charging stations was modeled as a non-cooperative game, and the optimal pricing strategy was derived by a multi-agent RL framework in \cite{QianMultiAgent2022}.
Despite their potential, RL-based methods require vast amounts of training data and computational resources. Moreover, RL may not be suitable for problems with hard constraints, such as EV charging scheduling problems, due to the lack of safety guarantees.

Apart from learning-based approaches, Lyapunov optimization is also a powerful prediction-free optimization framework.
It offers rigorous performance guarantees and low computational complexity \cite{NeelyStochastic2010}, and thus is particularly suitable for real-time EV charging scheduling. 
%In Lyapunov optimization framework, key state variables are represented as virtual queueing without detailed modelling of system dynamics.
% At each time step, the strategy is derived by the
% "drift-plus-penalty" algorithm, which balances
% stabilizing the queues (ensuring long-term constraints are met) with
% minimizing the immediate objective function.
%Moreover, the optimality of Lyapunov optimization is analytically tractable. Specifically, the optimality gap can be bounded theoretically, which is a significant advantage compared to the aforementioned methods like MPC. 
A deadline-aware charging scheduling scheme was proposed based on  Lyapunov optimization  in \cite{WangOnline2025} considering state-of-charge (SOC)-dependent peak charging rate.
Energy sharing between charging stations was investigated in \cite{YanDistributed2023}, and a decentralized algorithm was developed for privacy protection.
A real-time aggregate EV power flexibility characterization method was developed in \cite{YanRealTime2024}.
A demand response program was designed in \cite{MENGHWAR2024122426} to alleviate network congestion, in which EV charging demand and flexible household load were scheduled by Lyapunov optimization.
Lyapunov optimization was combined with a game-theoretic approach to jointly optimize pricing schemes and charging schedules of charging stations in \cite{AbbasiCoupled2024}.

While Lyapunov optimization has already performed pretty well in EV charging scheduling, we notice an opportunity to further enhance its performance. That is, to incorporate the relatively accurate short-term forecasts of EV arrivals, renewable power supply, and electricity prices, which will give rise to the forecast-enhanced Lyapunov optimization in this paper. With the rapid development of short-term forecasting techniques \cite{TawnReview2022}, uncertainties in EV charging scheduling, such as EV arrival time, electricity prices and renewable generation, can be predicted with reasonable accuracy for a very short horizon (e.g., from several minutes to a few hours). For example, a two-step learning framework was developed to forecast the half-hourly step electricity prices in \cite{GhimireTwostep2024}, with an error of only 2.12\% in predicting the peak electricity price.
Wind power has been forecast by machine learning techniques such as Reformer \cite{WangMPCbased2025} or ensemble model \cite{WangEndtoEnd2025}, with a mean square error of 0.004 pu in second-scale prediction and a mean absolute percentage error (MAPE) of 3.7\% in hour-scale prediction respectively. Incorporating these accurate short-term forecasts into the Lyapunov optimization framework could enhance its performance, which has not been explored.

To fill the research gaps above, this paper proposes a novel approach that incorporates high-accuracy  short-term forecasts into the Lyapunov optimization framework, which mitigates its myopic decision-making while retaining the theoretical performance guarantees. Our main contributions are two-fold:

%advantages of Lyapunov optimization Compared to the forecast-enhanced MPC methods, i.e., low computational complexity and theoretical guarantee. In a word, the proposed approach offers a more efficient and effective solution for real-time EV charging scheduling.

% 1.5 Contributions and Paper Organization

%This paper makes several key contributions to the fields of online optimization and real-time EV charging scheduling.

\begin{enumerate}[1)]
\item We propose a \emph{forecast-enhanced Lyapunov optimization method} that leverages accurate short-term predictions to improve online decision-making.  This is achieved by novel virtual queue design combined with a receding horizon control framework. Case studies demonstrate that, compared with the traditional prediction-free Lyapunov optimization approach, the proposed method significantly reduces total charging costs while satisfying EV charging requirements. It also outperforms the MPC method both in terms of charging cost and delay. To the best of our knowledge, this is the first work that incorporates forecasts into the Lyapunov optimization framework.
\item We provide \emph{theoretical performance guarantees} for the proposed algorithm. Specifically, we prove that the charging delay of all EVs is upper-bounded by a term related to the length of the prediction window. Moreover, the cost gap between the offline optimal solution and the online decisions by the proposed algorithm is also proven to be bounded. Case studies further illustrate the trade-off between reduced charging delay and lower charging costs, as well as the key factors influencing this balance.
  % \item A novel online optimization framework is developed based on Lyapunov  optimization algorithm with precise short-term forecasts, which can overcome the inherent sub-optimality of standard Lyapunov optimization. A novel forecast-enhanced virtual queue is designed, by which the drift-plus-penalty algorithm can be applied to a lookahead window.
  % To the best of our knowledge, this is the first work that incorporates forecast in the Lyapunov optimization framework.
  % \item A real-time EV charging scheduling strategy is proposed under a buffered average policy, which can effectively utilize the forecast information to improve real-time decision-making.
  % A rigorous theoretical  analysis on the proposed strategy is provided, by which the theoretical bound of key metrics are derived, including  the optimality gap, the forecast-enhanced virtual queues, and the maximum charging delay.
  % \item The drift-plus-penalty algorithm is generalized from homogeneous to heterogeneous penalty, which provides more flexibility of parameter tuning for Lyapunov optimization approaches. The impact of the generalization on the theoretical performance is analyzed.
\end{enumerate}

The remainder of this paper is organized as follows. EV charging scheduling is modeled in Section \ref{sec:model}. The forecast-enhanced Lyapunov optimization framework is developed in Section \ref{sec:lya}. The proposed algorithm is then validated and its performance is evaluated by case studies in Section \ref{sec:result}. Finally, conclusions are drawn in Section \ref{sec:conclu}.

\section{EV Charging Scheduling Model}
\label{sec:model}
In this section, we first introduce the offline EV charging scheduling problem and then present its equivalent and relaxed formulations for developing the online algorithm in Section \ref{sec:lya}.

\subsection{Offline EV Charging Scheduling Problem}

Let $T$ and $\Delta t$ be the number of time slots and the duration of each time slot, respectively. 
Denote the set of all EVs by $\mathcal{S}$. Suppose the battery charging efficiency is homogeneous for all EVs, denoted by $\eta$.
For each EV $v \in \mathcal{S}$, let $T_{v}^{\mathrm{a}}$ and $T_{v}^{\mathrm{d}}$ be its arrival and departure time respectively, with $T_{v}^{\mathrm{a}} < T_{v}^{\mathrm{d}}$. Then, the available window of charging service for each EV is defined by 
\begin{equation}
  \mathcal{U}_v \triangleq \left\{ T_v^{\mathrm{a}}, \dots, T_v^{\mathrm{d}}-1 \right\}.
\end{equation}
Note that an EV is allowed to be charged at its arrival time slot but not at its departure time slot. 
Let $E_v^{\mathrm{int}}$ and $E_v^{\mathrm{dep}} $ be the  battery energy level of EV $v$ when it arrives and leaves respectively. Denote the battery capacity of the EV $v$ by $E_v^{\mathrm{cap}}$. Then we  define $E_{v}^{\mathrm{req}} \triangleq E_v^{\mathrm{dep}}-E_v^{\mathrm{int}}$ and $E_{v}^{\max} \triangleq E_v^{\mathrm{cap}}-E_v^{\mathrm{int}}$, which are the required and maximum   charging energy of EV $v$, respectively. Assume $0 < E_{v}^{\mathrm{req}} \le E_{v}^{\max}$ for every $v \in \mathcal{S}$, and let $P_v$ be its maximum  charging power. 
Let $f(t)$ be the total charging cost of all EVs at time $t$, given by
\begin{equation}
  f(t) \triangleq \pi(t) \sum_{v \in \mathcal{S}} p_{v}(t),
\end{equation}
where $\pi(t)$ is the electricity price and $p_{v}(t)$ is the charging power of EV $v$ at time slot $t$.
The optimal charging scheduling problem of  EVs is formulated by
\begin{subequations}
\begin{align}
  \textbf{P1:} \min_{\substack{p_v(t), \\ \forall v \in \mathcal{S}, \\ \forall t =1, \dots, T}} ~ 
  & \frac{1}{T} \sum_{t=1}^{T}  f(t) , \label{obj:ev} \\
  \mbox{s.t.} \quad
  & 0 \le p_{v}(t) \le P_{v}, ~ \forall t \in \mathcal{U}_v, \forall v \in \mathcal{S},\label{ineq:power_bd}      \\
  & p_{v}(t) =0, ~ \forall t \notin \mathcal{U}_v, \forall v \in \mathcal{S},
  \label{eq:power_mask}      \\
  & E_{v}^{\mathrm{req}} \le \eta \Delta t \sum_{t=1}^{T} p_{v}(t) \le E_{v}^{\max}, \forall v \in \mathcal{S}. \label{ineq:energy_req}
\end{align}
\end{subequations}
% \textit{All constraints in \textbf{P1} hold for every $v \in \mathcal{S}$.}
The objective function \eqref{obj:ev} aims to minimize the average total charging cost. Constraints \eqref{ineq:power_bd} and \eqref{eq:power_mask} ensure that EVs charge only when they are in the charging station. Their charging requirements are met as given in \eqref{ineq:energy_req}.

\subsection{Equivalent Transformation Based on EV Grouping}

In practice, EVs can be grouped by the EV aggregator based on certain criteria, such as their maximum charging power, location, battery capacity (if available), etc., by which \textbf{P1} is transformed into an aggregate charging scheduling problem. The benefit of grouping EVs is that the number of decision variables is reduced significantly, and thus the problem can be solved more efficiently for real-time implementation. In this study, the EVs are grouped by their parking time for the convenience of defining virtual queues later. Denote the parking time of the group $g$ by $R_g$, i.e., $| \mathcal{U}_v | = R_g$, $\forall v \in \mathcal{S}_g$.

Suppose the EVs are divided into $G$ groups, indexed by $g=1,\dots,G$. Denote the set of EVs in the group $g$ by $\mathcal{S}_g$. Formally we have
\begin{subequations}
\begin{align}
  & \mathcal{S} = \bigcup_{g=1}^{G} \mathcal{S}_g, \\
  & \mathcal{S}_i \cap \mathcal{S}_{j} = \emptyset, ~ \forall i \neq j, ~ i,j \in \{1, \dots, G\}.
\end{align} 
\end{subequations}
Moreover, define a subset of $\mathcal{S}_g$ for the convenience of subsequent formulation:    
\begin{align}
  \mathcal{S}_g^{\mathrm{act}} (t) &\triangleq \left\{v \in \mathcal{S}_g: t \in \mathcal{U}_v \right\}, 
\end{align}
where $\mathcal{S}_g^{\mathrm{act}} (t)$ is the set of  active EVs (i.e., available for charging) at time $t$.
For group $g$ at time  $t$, the aggregate  charging power $x_{g}(t)$ % and aggregate energy $e_{g}(t)$ are  
is given  by
\begin{align}
  x_{g}(t) \triangleq \sum_{v \in \mathcal{S}_g} p_{v} (t).
\end{align}
In the aggregate charging scheduling problem, the objective function $f(t)$ can reformulated as
\begin{equation}
  f(t) = \pi(t) \sum_{g=1}^{G} x_{g}(t).
\end{equation}
Then an intuitive formulation of the aggregate charging scheduling  problem can be given as follows:
\begin{subequations}
\begin{align}
  \textbf{P2:} \min_{\substack{x_g(t), \\ \forall g =1, \dots, G, \\ \forall t =1, \dots, T}} ~ 
  & \frac{1}{T} \sum_{t=1}^{T}  f(t),  \label{obj:agg} \\
  \hbox{s.t.} \quad
  & 0 \le x_{g}(t) \le \!\! \sum_{v \in \mathcal{S}_g^{\mathrm{act}} (t)} \min \left\{ \frac{E_{v}^{\max}}{\eta \Delta t}, P_v \right\},  \nonumber    \\
  & \forall t, \forall g, \label{ineq:agg_power_bd}  \\
  & \sum_{v \in \mathcal{S}_g} E_{v}^{\mathrm{req}} \le \eta \Delta t \sum_{t=1}^{T} x_{g}(t) \le \!\! \sum_{v \in \mathcal{S}_g} E_{v}^{\max}, \forall g. \label{ineq:agg_energy_naive}
\end{align}
\end{subequations}
The minimization in \eqref{ineq:agg_power_bd} is used to address the extreme case where $E_{v}^{\max} < P_v \eta \Delta t$, i.e., when even a single time slot with maximum power would cause overcharge.

% \textit{All constraints in \textbf{P2} hold for every $g \in \{1, \dots, G\}$}.

However, \textbf{P2} is not equivalent to \textbf{P1} since the energy requirement of each EV is relaxed into the aggregate energy requirement of the group, which is the tradeoff made for reducing the number of decision variables. It is easy to find a feasible solution to \textbf{P2} that cannot be disaggregated into a feasible solution to \textbf{P1}.
A simple example is given as follows. Consider $T = 2$ and $\eta = 1$, $\Delta t = 1$ h for simplicity.
There are two EVs in the group with the same energy requirement $E_1^{\mathrm{req}} = E_2^{\mathrm{req}} = 1$ kWh, and other parameters are given by:
\begin{itemize}
  \item EV 1: $P_1 = 1$ kW, $T_1^{\mathrm{a}} = 1$, $T_1^{\mathrm{d}} = 2$;
  \item EV 2: $P_2 = 2$ kW, $T_2^{\mathrm{a}} = 1$, $T_2^{\mathrm{d}} = 3$.
  %(not available at $t=3$).
\end{itemize}
Suppose $\pi (1) > \pi (2)$, then it can be verified that the optimal solution to \textbf{P2} is $x (1) = 0$, $x (2) = 2$ kW. However, it cannot be disaggregated into a feasible solution to \textbf{P1} since the energy requirement of EV 1 cannot be met at $t=2$. 

To obtain an aggregation problem equivalent to \textbf{P1}, the aggregate energy requirement \eqref{ineq:agg_energy_naive} in \textbf{P2} should be more restrictive to ensure that the energy requirement of each EV can be satisfied. A straightforward way is to investigate every possible duration in the scheduling horizon and enforce constraints for all durations. Formally, we have the following formulation:
\begin{subequations}
\begin{align}
  \textbf{P3: }
  % \min_{\substack{x_g(t), \\ g \in \{1, \dots, G\}, \\ t \in \{1, \dots, T\}}} ~ 
  & \eqref{obj:agg}, \nonumber \\
  \hbox{s.t.}~        
  % & \eqref{ineq:agg_power_bd}, \nonumber \\ 
  % & \sum_{t \in \mathcal{U}} x_{g}(t) \ge \underline{X}_g^\mathcal{U} ~ \mathcal{U} \subseteq \{1, \dots, T\}, \label{ineq:agg_energy_req} \\
  & \underline{X}_g^\mathcal{U} \le \sum_{t \in \mathcal{U}} x_{g}(t) \le \overline{X}_g^\mathcal{U}, \nonumber \\
  & \forall \mathcal{U} \subseteq \{1, \dots, T\}, \forall g \in \{1, \dots, G\}, \label{ineq:agg_energy_bd}
\end{align}
where
\begin{align}
    \underline{X}_g^\mathcal{U} & = \sum_{v \in \mathcal{S}_g} \max \left\{0, \frac{E_{v}^{\mathrm{req}}}{\eta \Delta t} -  P_v  \left| \mathcal{\mathcal{U}}_v \setminus \mathcal{U} \right| \right\},  \label{eq:agg_energy_lb} \\
    \overline{X}_g^\mathcal{U} & = \sum_{v \in \mathcal{S}_g} \min \left\{ \frac{E_{v}^{\max}}{\eta \Delta t}, P_v \left| \mathcal{\mathcal{U}}_v \cap \mathcal{U} \right| \right\}.  \label{eq:agg_energy_ub}
\end{align}
\end{subequations}
% \textit{All constraints in \textbf{P2'} hold for every $g \in \{1, \dots, G\}$.}
$| \mathcal{\mathcal{U}}_v \setminus \mathcal{U}|$ represents the length of a duration $ \mathcal{\mathcal{U}}_v \setminus \mathcal{U}$ in \eqref{eq:agg_energy_lb}, and similar for \eqref{eq:agg_energy_ub}. Thus, $\eta P_v \Delta t \left| \mathcal{\mathcal{U}}_v \setminus \mathcal{U} \right|$ and $\eta P_v \Delta t \left| \mathcal{\mathcal{U}}_v \cap \mathcal{U} \right|$ represent the maximum energy that can be charged to EV $v$ outside and inside the duration $\mathcal{U}$, respectively. 
Note that \eqref{ineq:agg_power_bd} is implied by \eqref{ineq:agg_energy_bd} with $\mathcal{U} = \{t\}$ for every $t \in \{1, \dots, T\}$.

\textbf{P3} is equivalent to \textbf{P1}. Formally, we have the following proposition:
{
\setlength{\lineskiplimit}{3pt}
\setlength{\lineskip}{3pt}
\begin{proposition}[Equivalent Aggregation]
  \label{prp:aggregation}
  %\textbf{P3} is feasible if and only if \textbf{P1} is feasible. Specifically, 
  We have the following relationship between \textbf{P1} and \textbf{P3}:
  \begin{itemize}
    \item If $[\bar{p}_v (t)]_{v \in \mathcal{S}}^{t = 1, \dots, T}$ is a feasible solution to \textbf{P1}, then $\left[\sum_{v \in \mathcal{S}_g} \bar{p}_v (t) \right]_{g = 1, \dots, G}^{t = 1, \dots, T}$ is a feasible solution to \textbf{P3}.
    \item If $\left[\bar{x}_g (t)\right]_{g = 1, \dots, G}^{t = 1, \dots, T}$ is a feasible solution to \textbf{P3}, then there exists a feasible solution $\left[\bar{p}_v (t)\right]_{v \in \mathcal{S}}^{t = 1, \dots, T}$ to \textbf{P1} such that $\bar{x}_g (t) = \sum_{v \in \mathcal{S}_g} \bar{p}_v (t)$ for every $g = 1, \dots, G$ and $t = 1, \dots, T$.
  \end{itemize}
\end{proposition}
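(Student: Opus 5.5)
The first item is direct. Given a feasible $[\bar p_v(t)]$ for \textbf{P1}, set $\bar x_g(t)=\sum_{v\in\mathcal{S}_g}\bar p_v(t)$ and fix any $\mathcal{U}\subseteq\{1,\dots,T\}$. For each $v$, \eqref{ineq:power_bd}--\eqref{eq:power_mask} give $\sum_{t\in\mathcal{U}}\bar p_v(t)\le P_v|\mathcal{U}_v\cap\mathcal{U}|$. The upper half of \eqref{ineq:energy_req}, together with nonnegativity, gives $\sum_{t\in\mathcal{U}}\bar p_v(t)\le E_v^{\max}/(\eta\Delta t)$. Hence the sum is at most the minimum of the two. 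For the lower bound, the energy charged outside $\mathcal{U}$ is at most $P_v|\mathcal{U}_v\setminus\mathcal{U}|$, so the lower half of \eqref{ineq:energy_req} forces $\sum_{t\in\mathcal{U}}\bar p_v(t)\ge E_v^{\mathrm{req}}/(\eta\Delta t)-P_v|\mathcal{U}_v\setminus\mathcal{U}|$, and the sum is also $\ge 0$. Summing over $v\in\mathcal{S}_g$ yields \eqref{ineq:agg_energy_bd}.

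For the second item I would treat each group $g$ separately, since both \textbf{P1} and \textbf{P3} decouple across groups. Disaggregation is then a feasible-flow problem with lower and upper bounds. Build a network with a source $s$, a node for each time slot $t$, a node for each $v\in\mathcal{S}_g$, and a sink $d$. The arcs are:
\begin{itemize}
\item $s\to t$ with lower and upper bound both equal to $\bar x_g(t)$;
\item $t\to v$ for $t\in\mathcal{U}_v$, with bounds $[0,P_v]$;
\item $v\to d$ with bounds $[\ell_v,u_v]$, where $\ell_v=E_v^{\mathrm{req}}/(\eta\Delta t)$ and $u_v=E_v^{\max}/(\eta\Delta t)$;
\item a return arc $d\to s$ with bounds $[0,\infty)$.
\end{itemize}
A feasible circulation gives $\bar p_v(t)$ as the flow on $t\to v$, and this is exactly a feasible \textbf{P1} solution for group $g$ that aggregates to $\bar x_g$. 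I would invoke Hoffman's circulation theorem: a feasible circulation exists iff for every node set $A$ the total lower bound on arcs entering $A$ is at most the total upper bound on arcs leaving $A$. Since $\ell_v\le u_v$, all arc bounds are consistent.

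The main work, and the main obstacle, is showing that every Hoffman cut condition is implied by \eqref{ineq:agg_energy_bd}. I would split on whether $s$ and $d$ lie in $A$.
\begin{itemize}
\item If $s\in A$ and $d\notin A$, the infinite return arc enters $A$ but has lower bound $0$, so it is harmless. Let $\mathcal{U}$ be the time slots in $A$ and $B$ the EVs in $A$. The condition becomes $\sum_{t\notin\mathcal{U}}\bar x_g(t)\le \ldots$; after rearranging with the total flow, it reads $\sum_{t\in\mathcal{U}}\bar x_g(t)\le\sum_{v\in B}u_v+\sum_{v\notin B}P_v|\mathcal{U}_v\cap\mathcal{U}|$ plus terms that are slack. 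Because the right-hand side decomposes per EV, the worst choice of $B$ puts each $v$ on its cheaper side. This yields exactly $\overline{X}_g^{\mathcal{U}}$.
\item If $d\in A$ and $s\notin A$, the arc $d\to s$ leaves $A$ with infinite capacity, so the condition is trivial.
\item If $A$ contains neither or both of $s,d$, I expect the lower-bound version. Taking the complement-type sets gives $\sum_{v\in B}\ell_v-\sum_{v\in B}P_v|\mathcal{U}_v\setminus\mathcal{U}|\le\sum_{t\in\mathcal{U}}\bar x_g(t)$, again up to rearrangement. The per-EV choice of $B$ (include $v$ only when the term is positive) gives the $\max\{0,\cdot\}$ form of $\underline{X}_g^{\mathcal{U}}$.
\end{itemize}

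The delicate point is bookkeeping, so that each configuration reduces to precisely one of the two inequalities of \eqref{ineq:agg_energy_bd} rather than to a mixed condition. If a mixed condition does appear, I would use the fact that the $s\to t$ arcs are tight to trade the two inequalities against each other via $\mathcal{U}$ and its complement. As a fallback I would use the Gale--Hoffman supply--demand theorem for bipartite transportation with exact supplies and interval demands, whose conditions are stated per supply subset.
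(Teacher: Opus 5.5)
Your approach matches the paper's. The first item is argued the same way. For the second item the paper builds exactly your network (source, time nodes, EV nodes, sink, and a return arc with bounds $[0,\infty)$) and applies Hoffman's circulation theorem. The problem is the cut bookkeeping, which you rightly call the main work: as written, the cases are assigned to the wrong conditions.

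\textbf{Case $s\in A$, $d\notin A$.} This does not give the upper-bound cut. The only arcs entering $A$ are $d\to s$ and arcs $t\to v$ with $t\notin A$, $v\in A$, and all of these have lower bound $0$. So the condition is vacuous, just as in the case $d\in A$, $s\notin A$. The term $\sum_{t\notin\mathcal{U}}\bar x_g(t)$ you wrote comes from arcs $s\to t$ that \emph{leave} $A$, so it sits on the capacity side, not the demand side.

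\textbf{Case $s,d\notin A$.} This is where the upper-bound cut comes from. Take $A=\mathcal{U}\cup B$. The arcs $s\to t$ with $t\in\mathcal{U}$ enter $A$ with lower bound $\bar x_g(t)$. The arcs $t\to v$ ($t\in\mathcal{U}$, $v\notin B$) and $v\to d$ ($v\in B$) leave $A$. This gives $\sum_{t\in\mathcal{U}}\bar x_g(t)\le\sum_{v\in B}u_v+\sum_{v\notin B}P_v|\mathcal{U}_v\cap\mathcal{U}|$.

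\textbf{Case $s,d\in A$.} Only this case gives the lower-bound cut. Let $\mathcal{U}$ be the time nodes outside $A$ and $C$ the EV nodes outside $A$. The arcs $v\to d$ with $v\in C$ enter $A$ with lower bound $\ell_v$. The arcs $s\to t$ ($t\in\mathcal{U}$) and $t\to v$ ($t\notin\mathcal{U}$, $v\in C$) leave $A$. This gives $\sum_{v\in C}\bigl(\ell_v-P_v|\mathcal{U}_v\setminus\mathcal{U}|\bigr)\le\sum_{t\in\mathcal{U}}\bar x_g(t)$.

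So ``neither'' and ``both'' cannot be lumped together as ``the lower-bound version.''

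With this correction, each nontrivial cut reduces to exactly one half of \eqref{ineq:agg_energy_bd}. Your per-EV choice of $B$ (or $C$) then turns it into $\overline{X}_g^{\mathcal{U}}$ (or $\underline{X}_g^{\mathcal{U}}$). No mixed condition ever arises, so trading the inequalities via complementary sets or falling back on Gale--Hoffman is unnecessary. The corrected split is exactly the paper's Case~1 ($s,r\notin\mathcal{W}$) and Case~2 ($s,r\in\mathcal{W}$), with the mixed configurations dismissed as trivial.
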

}
The proof of the proposition can be found in Appendix \ref{appd:aggregation}.
According to Proposition \ref{prp:aggregation}, every feasible solution to one of the problems can be mapped to a feasible solution of the other problem. Moreover, the optimal values of both problems are equal since the objective functions are also equivalent. 

\subsection{Adapting \textbf{P2} for Online Implementation}

Although \textbf{P3} is equivalent to \textbf{P1}, it is  not suitable for online implementation due to the complexity of constraints \eqref{ineq:agg_energy_bd}. A relaxation is necessary to make the problem tractable for the Lyapunov optimization framework.
Before we present the details of the Lyapunov optimization, we first define the aggregate EV charging demand. 
\begin{gather}\label{eq:agvt-lb}
    a_v(t)\! = \!\!\left\{\begin{array}{ll}
    P_v,     \!\! & \!\! \mathrm{if}~ T_v^{\mathrm{a}} \leq t \le T_v^{\mathrm{a}} + T_v^{\min}, \\
    \frac{E_v^{\mathrm{req}}}{\eta} - T_v^{\min} P_v, \!\!& \!\!\mathrm{if}~ t = T_v^{\mathrm{a}} + T_v^{\min}+1, \\
    0,           \!\! & \!\! \mathrm{otherwise},
    \end{array}\right.
\end{gather}
where $T_v^{\min} = \left\lfloor \frac{E_v^{\mathrm{req}}}{P_v \eta} \right\rfloor$. 
Let $a_g(t)=\sum_{v \in \mathcal{S}_g} a_v(t)$.
% For each EV $v$, its   EV charging demand at each time slot is evaluated based on the two assumptions:
% \begin{enumerate}[(1)]
%   \item Each EV can start charging as soon as it arrives at the charging station,
%     and it will be charged until its energy level reaches the desired level before departure.
%   \item Each EV can be charged at its maximum charging power $P_{v}$ during    the charging period except for the last time slot of charging.
% \end{enumerate}
% The details of EV charging model can be found in \cite{HuangOptimal2025}. 
% In a word, the charging demand of an EV is equal to its maximum charging power during the charging period except for the last time slot of charging, and the aggregate arrival rate of charging demand $a_{g}(t)$  is given by the sum of the charging demand of all EVs in group $g$ at time slot $t$.
Assume $\pi(t)$, $T_v^{\mathrm{a}}$ and $E_v^{\mathrm{req}}$ to be i.i.d. across time or EVs, then a stochastic aggregate EV charging scheduling problem can be defined based on $a_{g}(t)$:
\begin{subequations}
\begin{align}
  \textbf{P4:} \inf_{\substack{x_g(t), \\ \forall g =1, \dots, G, \\ \forall t =1, \dots, T}}
  & \lim_{T \to \infty} \frac{1}{T} \sum_{t=1}^{T} \mathbb{E} \left[ f(t) \right] \label{obj:agg_exp} ,\\
  \hbox{s.t.} \quad       
  % & 0 \le x_{g}(t) \le x_{g}^{\max}(t), ~ t = 1, \dots, T, \label{ineq:agg_power_bd}      \\
  & \eqref{ineq:agg_power_bd}, \nonumber \\
  & \lim_{T\to\infty} \frac{1}{T} \sum_{t=1}^{T} \mathbb{E}[a_{g}(t) - x_{g}(t)] \leq 0. \label{ineq:ag_xg}    
\end{align}
\end{subequations}
The expectations in \eqref{obj:agg_exp} and \eqref{ineq:ag_xg} are taken over the distribution of $\pi(t)$ and $a_{g}(t)$, respectively.
Constraints \eqref{ineq:agg_energy_bd}--\eqref{eq:agg_energy_ub} have been replaced by \eqref{ineq:agg_power_bd} and \eqref{ineq:ag_xg}, which guarantees that all charging demand can be satisfied in the long run. \textbf{P4} is  a relaxation of \textbf{P3}, by which the energy requirement is satisfied over an infinite horizon instead of a finite horizon. Specifically, \eqref{ineq:agg_power_bd} and \eqref{ineq:agg_energy_naive} are two special cases of \eqref{ineq:agg_energy_bd} with $\mathcal{U} = \{t\}$ and $\mathcal{U} = \{1, \dots, T\}$ respectively. 
Suppose  $\pi(t)$ is bounded for any $t \in \{1, \dots, T\}$, which is usually the case in reality, 
% and denote the optimal values of \textbf{P3} and \textbf{P4} by $f_2^*$ and $f_3^*$ respectively, 
then we have the following claim on the optimality of \textbf{P4}:

\begin{claim}[Optimality Gap]
  \label{clm:relaxation}
  The optimality gap between \textbf{P3} and \textbf{P4} can be bounded by 
  \begin{equation}
  \label{ineq:gap_offline}
    f_2^* - f_3^* \le \frac{1}{\eta T \Delta t} \sum_{v\in S} \left( E_v^\mathrm{req} \max_{t_1, t_2 \in \mathcal{U}_v} ( \pi(t_1)-\pi(t_2) )  \right),
  \end{equation}
where $f_2^*$ and $f_3^*$ are the optimal objective values of \textbf{P3} and \textbf{P4}, respectively.
\end{claim}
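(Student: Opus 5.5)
Since \textbf{P4} relaxes \textbf{P3}, $f_3^* \le f_2^*$ already, and only the upper bound on $f_2^* - f_3^*$ needs proof. The plan is to take a solution of the relaxation, or a lower bound on its value, and repair it into a feasible solution of \textbf{P3} (equivalently, by Proposition~\ref{prp:aggregation}, of \textbf{P1}). The repair cost per EV should be at most $E_v^{\mathrm{req}}/(\eta\Delta t)$ times the price spread over $\mathcal{U}_v$. We interpret $f_3^*$ on the same realized sample path and horizon, so that \eqref{ineq:ag_xg} reduces to the aggregate requirement $\eta\Delta t\sum_t x_g(t)\ge \sum_{v\in\mathcal{S}_g}E_v^{\mathrm{req}}$ together with \eqref{ineq:agg_power_bd}. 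This is the same structure as \textbf{P2}, with the upper energy limit possibly dropped, which can only lower $f_3^*$.

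\emph{Lower bound on $f_3^*$.} Any solution of the relaxed problem must supply total energy at least $\sum_v E_v^{\mathrm{req}}/(\eta\Delta t)$ in power-slot units. Here we must control where that energy can be placed. Power at time $t$ in group $g$ is bounded by the capacities of the EVs active at $t$ by \eqref{ineq:agg_power_bd}, so we attribute the aggregate $x_g(t)$ to active EVs proportionally to their caps. Each unit of energy is then charged at a price of at least $\min_{t\in\mathcal{U}_v}\pi(t)$ for some EV $v$ active at that time. One subtlety is that the attribution may give some EVs more than their requirement and others less, so the inequality
\[
T f_3 \ge \sum_v \frac{E_v^{\mathrm{req}}}{\eta\Delta t}\min_{t\in\mathcal{U}_v}\pi(t)
\]
is not immediate. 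This is where I expect the main difficulty.

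\emph{Upper bound on $f_2^*$ and combination.} For \textbf{P1} we construct an explicit feasible schedule. Each EV $v$ charges its required energy within $\mathcal{U}_v$ under the power limit $P_v$, and every slot it uses costs at most $\max_{t\in\mathcal{U}_v}\pi(t)$. This gives
\[
T f_2^* \le \sum_v \frac{E_v^{\mathrm{req}}}{\eta\Delta t}\max_{t\in\mathcal{U}_v}\pi(t).
\]
Feasibility of this schedule uses the standing assumption $0<E_v^{\mathrm{req}}\le E_v^{\max}$ and that \textbf{P1} is feasible. Subtracting the lower bound from this upper bound, EV by EV, and dividing by $T$ gives exactly \eqref{ineq:gap_offline}.

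\emph{Handling the attribution problem.} To make the lower bound rigorous despite the over/under-allocation issue, I would first try the following comparison. Take an optimal solution $x^\circ$ of the relaxed problem and assume without loss of generality that it delivers exactly the aggregate requirement, since extra energy only costs more when $\pi\ge 0$. Disaggregate $x^\circ$ greedily into per-EV schedules, capped by $P_v$ and by each EV's remaining requirement, filling EVs in order of earliest departure. Any EV left with an unmet residual $r_v$ is then given $r_v$ inside its own window. Charging is only ever moved from one slot in $\mathcal{U}_v$ to another slot in $\mathcal{U}_v$, so each unit of energy changes cost by at most $\max_{t_1,t_2\in\mathcal{U}_v}(\pi(t_1)-\pi(t_2))$. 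The moved amount for EV $v$ is at most $E_v^{\mathrm{req}}/(\eta\Delta t)$, which yields the bound directly.

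If the greedy disaggregation fails to keep moves inside the windows, the fallback is a flow/transportation argument. Here one uses the fact that $x^\circ(t)$ is nonzero only when active EVs exist, by \eqref{ineq:agg_power_bd}, so the energy at time $t$ can always be assigned to some EV with $t\in\mathcal{U}_v$.
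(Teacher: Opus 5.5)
Your two-bound plan is exactly the paper's argument. The paper bounds $f_2^*$ above by $\frac{1}{\eta T\Delta t}\sum_v E_v^{\mathrm{req}}\max_{t\in\mathcal{U}_v}\pi(t)$, just as your explicit schedule does, and that half is fine. It then simply asserts, calling it trivial, that $f_3^*\ge\frac{1}{\eta T\Delta t}\sum_v E_v^{\mathrm{req}}\min_{t\in\mathcal{U}_v}\pi(t)$.

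You were right to distrust this second step, but the over/under-allocation issue cannot be engineered around. That inequality is false, and so is \eqref{ineq:gap_offline} on the sample-path reading you adopt. The relaxation keeps only \eqref{ineq:agg_power_bd} and an aggregate energy requirement, so energy owed to one EV may be delivered in another EV's window. The premise of your repair, that charging is only ever moved between two slots of the same $\mathcal{U}_v$, is exactly what fails. The paper's own two-EV example (showing that \textbf{P2} differs from \textbf{P1}) already violates the per-EV lower bound: the relaxed optimum costs $2\pi(2)<\pi(1)+\pi(2)$.

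Here is a counterexample to the claim itself. Take $\eta=\Delta t=1$, $T=2$, $\pi(1)=2$, $\pi(2)=1$, and one group containing:
EV~1 with $\mathcal{U}_1=\{1\}$, $P_1=1$, $E_1^{\mathrm{req}}=E_1^{\max}=1$; and
EV~2 with $\mathcal{U}_2=\{2\}$, $P_2=10$, $E_2^{\mathrm{req}}=\epsilon\in(0,1)$, $E_2^{\max}=10$.
Both have parking time $1$, so grouping by parking time puts them together. Every window is a single slot, so the right-hand side of \eqref{ineq:gap_offline} is $0$. In \textbf{P1}/\textbf{P3}, EV~1 must take its unit at $t=1$, so $2f_2^*=2+\epsilon$. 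In the relaxation, $x(1)=0$, $x(2)=1+\epsilon\le 10$ is feasible, so $2f_3^*\le 1+\epsilon$ and $f_2^*-f_3^*\ge 1/2>0$.

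Your greedy repair breaks visibly here. At $t=2$ only EV~2 is active and it absorbs just $\epsilon$. EV~1's residual must therefore move from slot $2\notin\mathcal{U}_1$ to slot $1$, at a per-unit cost $\pi(1)-\pi(2)$ that neither EV's own price spread controls. The transportation fallback fails for the same reason: any assignment of the energy at $t=2$ to an active EV overfills EV~2 past $E_2^{\mathrm{req}}$ while EV~1 is underfilled.

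What the two-bound approach does prove is the version with the minimum taken over the group's whole availability set. Let $\mathcal{W}_g\triangleq\bigcup_{v\in\mathcal{S}_g}\mathcal{U}_v$ and assume nonnegative prices. Then \eqref{ineq:agg_power_bd} forces $x_g(t)=0$ for $t\notin\mathcal{W}_g$, so the aggregate requirement gives $Tf_3^*\ge\sum_g\min_{t\in\mathcal{W}_g}\pi(t)\sum_{v\in\mathcal{S}_g}E_v^{\mathrm{req}}/(\eta\Delta t)$. Combining this with your upper bound on $f_2^*$ yields
\[
f_2^*-f_3^*\le\frac{1}{\eta T\Delta t}\sum_{g=1}^{G}\sum_{v\in\mathcal{S}_g}E_v^{\mathrm{req}}\Bigl(\max_{t\in\mathcal{U}_v}\pi(t)-\min_{t\in\mathcal{W}_g}\pi(t)\Bigr).
\]
This coincides with \eqref{ineq:gap_offline} when all EVs in a group share one window. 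That is the situation the paper's remark about grouping EVs with overlapping windows is pointing toward.
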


The proof of Claim \ref{clm:relaxation} is trivial since 
\begin{subequations}
\begin{align}
    f_2^* & \le \frac{1}{\eta T \Delta t} \sum_{v\in S} \left( E_v^\mathrm{req} \max_{t \in \mathcal{U}_v} \pi(t) \right), \\
    f_3^* & \ge \frac{1}{\eta T \Delta t} \sum_{v\in S} \left( E_v^\mathrm{req} \min_{t \in \mathcal{U}_v} \pi(t) \right).
\end{align}
\end{subequations}
The bound \eqref{ineq:gap_offline} can be small if there is no significant fluctuation in electricity prices. Moreover, the bound can be  reduced by grouping EVs with similar available windows together since more overlap in available windows leads to the reduction of the gap between the aggregate feasible set of \textbf{P4} and the real feasible set of \textbf{P3}.

\section{Forecast-Enhanced Lyapunov Optimization Framework}
\label{sec:lya}

In this section, we present the proposed forecast-enhanced Lyapunov optimization framework to transform
\textbf{P4} into a problem that can be solved online. 
%As a classical optimization technique, it
% offers several advantages over other popular methods such as MPC
% and learning-based
% methods, including theoretical guarantees of optimality, ease of
% implementation
% and parameter tuning, and reduced computational burden.

\subsection{Introducing Forecast to Lyapunov Optimization}

Standard Lyapunov optimization framework can be found in \cite{HuangOptimal2025}, which is \emph{prediction-free}. In
this study, short-term forecasts of charging demands and electricity prices
are utilized to enhance the performance of the Lyapunov optimization
framework. Suppose we have the accurate predictions for charging  demand and electricity  prices in $w$ time slots,
where  $w$ is an integer in the range $[1,T]$.

First, we propose a new virtual queue design to integrate forecasts:
\begin{equation}
    \label{eq:Qgt}
    q_{g}(t+w) = \max \left\{q_{g}(t) + \sum_{\tau=0}^{w-1} (a_{g}(t+\tau) - x_{g}(t+\tau)), 0 \right\},
\end{equation}
where $q_{g}(t)$ is the backlog of charging demands at time slot $t$. Unserved
charging demands expected to arrive between $t$ and $t+w$ will be stacked in
$q_{g}(t+w)$ for processing.

Moreover, the delay-aware virtual queue $z_{g}(t)$ is defined by
\begin{equation}
    \label{eq:zgt}
    z_{g}(t+w) = \max \left\{z_{g}(t) - \sum_{\tau=0}^{w-1}x_{g}(t+\tau) + \frac{\alpha_{g}}{R_{g}} \mathbb{I}_{g}(t), 0 \right\} ,
\end{equation}
where $\alpha_{g}$ is a positive number, $R_{g}$ is the parking time of each group and $\mathbb{I}_{g}(t)$ is the
indicator function of $q_{g}(t)$, given by
\begin{align}
\mathbb{I}_{g}(t) \triangleq \left\{
  \begin{array}{ll}0, & \mathrm{if}~ q_g(t) = 0,\\ 1, &
    \mathrm{if}~ q_g(t) > 0.
  \end{array} \right.
\end{align}
$z_{g}(t)$ is used to control the charging delay of EVs since it
keeps growing
as long as there is unserved charging demand.
Moreover, the initial conditions of $q_{g}(t)$ and $z_{g}(t)$ are given by
% \begin{subequations}
\begin{equation}    
\label{eq:queue_init}
\begin{aligned}
    & q_g(t) = 0, \quad
     z_g(t) = 0, \\
    & \forall t = 1, 2, \dots, w, \forall g = 1, 2, \dots, G.
\end{aligned}
\end{equation}
% \end{subequations}

The virtual queues $q_{g}(t)$ and $z_{g}(t)$ are processed by the
first-in-first-out
(FIFO) method, that is, the charging demand $a_{g}(t)$ is served
in the order    of $t$.
In this way, the maximum charging delay of group $g$ is bounded    by the following    proposition.
\begin{proposition}[Maximum Charging Delay]
  \label{prp:delay}
  Suppose the virtual queues $q_{g}(t)$ and $z_{g}( t)$ are bounded, i.e.,
  $q_{g}(g) \le Q_{g}, z_{g}(t) \le Z_{g}, \forall t$, then the maximum
  charging delay $D_{g}$ of group $g$ is no more than
  \begin{equation}
    \label{ineq:delay_bound}
    D_{g}< \frac{w R_{g}(Q_{g}+ Z_{g})}{\alpha_{g}}        .
  \end{equation}
\end{proposition}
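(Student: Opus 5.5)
This is the standard delay argument for Lyapunov optimization with a persistent delay queue (Neely's $\epsilon$-persistent service queue), adapted to the $w$-step updates \eqref{eq:Qgt}--\eqref{eq:zgt}. The idea is a proof by contradiction. Fix a group $g$ and a time $t_0$ at which demand $a_g(t_0)$ arrives. Suppose this demand is not fully served by time $t_0 + D$, where $D \ge wR_g(Q_g+Z_g)/\alpha_g$. We show that then either $q_g$ or $z_g$ must exceed its bound.

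The first step uses the FIFO discipline. If the demand arriving at $t_0$ is still unserved at $t_0+D$, then the backlog stays positive throughout. More precisely, $q_g(t)>0$ for every queue-update epoch $t$ in $\{t_0+w, t_0+2w,\dots\}$ with $t\le t_0+D$. Hence $\mathbb{I}_g(t)=1$ at all these epochs. Moreover, the total service $\sum x_g$ delivered over $[t_0, t_0+D)$ is strictly smaller than $q_g$ just after the arrival at $t_0$. That quantity is at most $Q_g$, so the service over the interval is less than $Q_g$. Here I would be careful about aligning $t_0$ with the $w$-step grid: the queues are only updated every $w$ slots. So I would start the counting at the first update epoch at or after $t_0$ and absorb the misalignment into the strict inequality.

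The second step iterates the $z_g$ recursion. Since the $\max\{\cdot,0\}$ only increases the value, each update gives
\[
z_g(t+w)\ge z_g(t)-\sum_{\tau=0}^{w-1}x_g(t+\tau)+\frac{\alpha_g}{R_g}.
\]
Summing over the $K=\lfloor D/w\rfloor$ epochs in the interval yields
\[
z_g(t_0+Kw)\ge z_g(t_0)+\frac{K\alpha_g}{R_g}-\sum_{t\in[t_0,t_0+Kw)}x_g(t) > \frac{K\alpha_g}{R_g}-Q_g,
\]
using $z_g(t_0)\ge 0$ and the service bound from the first step. Combining with $z_g\le Z_g$ gives $K\alpha_g/R_g< Q_g+Z_g$, that is, $K< R_g(Q_g+Z_g)/\alpha_g$. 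Each epoch spans $w$ slots, so the delay satisfies $D_g\le wK$ up to the boundary epoch. This yields $D_g< wR_g(Q_g+Z_g)/\alpha_g$, which is \eqref{ineq:delay_bound}.

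The main obstacle is bookkeeping rather than any conceptual step, and it comes in two parts. The first is making the FIFO claim rigorous: the demand arriving at $t_0$ is spread by \eqref{eq:agvt-lb} over several slots, and "served" should mean the cumulative service has caught up with the cumulative arrivals up to $t_0$. The second is handling the partial window at the start and end so that the inequality remains strict. I would first define the delay via cumulative arrival and service curves, and check that the slack of at most one window is absorbed by the strictness of $q_g<Q_g$ service, possibly at the cost of replacing $\lfloor D/w\rfloor$ by $\lceil D/w\rceil$.
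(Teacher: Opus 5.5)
Your route is the paper's route: argue by contradiction, use FIFO to get $\mathbb{I}_g=1$ and a service bound by $Q_g$, telescope \eqref{eq:zgt} over windows of length $w$, and compare with $Z_g$. The last step does not go through. With $K=\lfloor D/w\rfloor$ you have $wK\le D<w(K+1)$, so your step $D_g\le wK$ is backwards. The bound $K<R_g(Q_g+Z_g)/\alpha_g$ only yields $D<w(K+1)$, which can exceed $wR_g(Q_g+Z_g)/\alpha_g$ by almost a full window. Neither proposed repair works. The strict inequality $\sum x_g<Q_g$ carries no quantitative margin, so it cannot pay for a missing increment $\alpha_g/R_g$. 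With $\lceil D/w\rceil$ windows, the last window runs past $t_0+D$, where FIFO says nothing about $x_g$. There is also a slip in your first step. You compare the service with $q_g$ just after the arrival at $t_0$, but no such value of $q_g$ exists: $q_g$ is sampled only at window boundaries, after service is subtracted. Part of $a_g(t_0)$ can be served in slot $t_0$ itself and never appear in $q_g$, so the service on $[t_0,t_0+D)$ need not be below $Q_g$. You must count from the first epoch after $t_0$ at which $a_g(t_0)$ enters $q_g$, and this costs further slots.

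This cannot be patched, because \eqref{ineq:delay_bound} is false for $w\ge 2$. Take $w=2$, $\alpha_g=R_g$, $0<\delta<\tfrac12$, and a single arrival $a_g(3)=A>\delta$ (all other $a_g(t)=0$). Let $x_g(3)=A-\delta$, $x_g(9)=1+\delta$, and $x_g(t)=0$ otherwise. Recall that \eqref{eq:Qgt}--\eqref{eq:zgt} define one chain per residue class mod $w$, so both chains must be checked. With \eqref{eq:queue_init}, the even epochs $2,4,\dots,10$ and the odd epochs $3,5,\dots,11$ both give $q_g=0,\delta,\delta,\delta,0$ and $z_g=0,0,1,2,2-\delta$, and nothing changes afterwards. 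Hence $Q_g=\delta$ and $Z_g=2$ are valid bounds, and the right-hand side of \eqref{ineq:delay_bound} is $4+2\delta<5$. Yet only $A-\delta<A$ has been delivered by slot $8=t_0+5$. The same construction for general $w$ overshoots by nearly $w-1$ slots.

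The paper's proof takes exactly your route and reaches its constant only through a miscount. It sums \eqref{ineq:zgpos} at the $n+1$ epochs $t_0,\dots,t_0+nw$ and credits $(n+1)\alpha_g/R_g$. However, it writes the left side as $z_g(t_0+nw)-z_g(t_0)$ and subtracts service only on $[t_0,t_0+nw-1]$. A consistent telescoping ends at $z_g(t_0+(n+1)w)$ and involves service up to $t_0+(n+1)w-1>t_0+D_g-1$, which \eqref{ineq:xq_Qg} does not control. That inequality also charges slot $t_0$ against $Q_g$, which fails in the example. Your window count is the consistent one; what this route actually proves is a bound with an additive slack of order $w$, not \eqref{ineq:delay_bound}.
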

The proof of Proposition \ref{prp:delay} can be found in Appendix    \ref{appd:delay}. 
Note that the framework is reduced to the standard Lyapunov optimization   when $w=1$.
Hence, it is revealed by \eqref{ineq:delay_bound} that incorporating the predictions may lead to increased delays.

Based on the virtual queues, we propose $\textbf{P4}'$, a relaxed counterpart of \textbf{P4} that integrates predictions.
%\textbf{P4} can be further transformed into the following problem by $q_{g}(t)$:
\begin{subequations}    
\begin{align}
  \textbf{P4$'$:} \inf_{x_g(t), \forall g, \forall t}~
  & \lim_{T \to \infty} \frac{1}{T} \sum_{t=1}^{T} \mathbb{E} \left[ \frac{1}{w} \sum_{\tau=0}^{w-1} f(t+\tau) \right], \label{eq:obj-P3'}  \\
  \hbox{s.t.}~        
  % & \eqref{eq:eg_init}, \nonumber \\
  & \eqref{ineq:agg_power_bd}, \label{ineq:agg_power_bd-2} \\
  & \lim_{T\rightarrow\infty} \frac{1}{T} \mathbb{E} [q_{g}(T)] = 0.  \label{eq:mean_rate_stable}
\end{align}
\end{subequations}
Note that \eqref{obj:agg_exp} is replaced by the average cost  over the lookahead window in $\textbf{P4}'$.
\eqref{eq:mean_rate_stable} is called mean-rate stability of $q_{g}(t)$. It can be shown that any feasible solution of $\textbf{P4}'$ is also a feasible solution to \textbf{P4}. Specifically, since $q_g(t)=0$ for  $t=1,2,...,w$, then the following inequality can be derived
from \eqref{eq:Qgt}:
\begin{align}
  \label{eq-1}q_{g}(t+w) - q_{g}(t) \ge
  \sum_{\tau=0}^{w-1}a_{g}(t+\tau) -
  \sum_{\tau=0}^{w-1}x_{g}(t+\tau).
\end{align}
Summing \eqref{eq-1} up over $t=1, 1+w,...,1+w (T-1)$ and
dividing both sides
by $w T$, we have
\begin{equation}
  \label{ineq:ag_xg_1}\lim_{T\rightarrow\infty}\frac{1}{w
  T}\sum_{t=1}^{w T}\mathbb{E}
  [a_{g}(t) - x_{g}(t)] \le \lim_{T\rightarrow\infty}\frac{1}{w T}\mathbb{E}
  [q_{g}(w T+1)] = 0.
\end{equation}
Hence, \eqref{ineq:ag_xg} holds if \eqref{eq:mean_rate_stable} holds.

\subsection{Drift-Plus-Penalty Algorithm}
\label{subsec:drift_plus_penalty}

Further, the Lyapunov optimization framework is applied to transform $\textbf{P4}'$ and derive an online solution algorithm.
The core idea of Lyapunov optimization is to minimize a \textit{drift-plus-penalty} term at each time slot to balance the cost minimization (objective of $\textbf{P4}'$) and queue stability (constraint \eqref{eq:mean_rate_stable} of $\textbf{P4}'$).

%instead of $f(t)$.
%By minimizing the drift-plus-penalty term, the algorithm aims to balances the trade-off between queue stability and cost minimization. 
First,  define the Lyapunov function as  the sum of the square of     backlogs of all virtual queues:
\begin{equation}
  \label{eq:LyaFun}
  L(\boldsymbol{\Theta}(t)) \triangleq \frac{1}{2} \sum_{g = 1}^{G}
  \left( q_{g}^{2}(t) + z_{g}^{2}(t) \right),
\end{equation}
where $\boldsymbol{\Theta}(t)$ is the system state vector at time slot $t$, $\boldsymbol{\Theta}(t) = (q_{1}(t), \dots, q_{G}(t), z_{1}(t),
\dots, z_{G}(t))$.
In other words, the Lyapunov function is  a scalar measure of the stability of a dynamic system. Then the Lyapunov drift from time slot $t$ to time slot $t+w$ is defined as
\begin{equation}
  \label{eq:LyaDrift}
  % \begin{aligned}
    \Delta(\boldsymbol{\Theta}(t))  \triangleq \mathbb{E}
    [L(\boldsymbol{\Theta}(t+w)) - L(\boldsymbol{\Theta}(t)) | \boldsymbol{\Theta}(t)].
  % \end{aligned}
\end{equation}
$\Delta(\boldsymbol{\Theta}(t))$ indicates the expected increment in the total backlogs of virtual queues. Further, we define the drift-plus-penalty term:
%To minimize the objective function of $\textbf{P4}'$ while ensuring the stability of virtual queues, \eqref{obj:agg_exp} should be replaced by the following drift-plus-penalty term:
\begin{equation}
\label{obj:drift_penalty}
    \frac{1}{w} \Delta(\boldsymbol{\Theta}(t)) + V \mathbb{E} \left[\frac{1}{w} \sum_{\tau=0}^{w-1} f(t+\tau) \Bigg| \boldsymbol{\Theta}(t) \right].
\end{equation}
The first term in \eqref{obj:drift_penalty} is called the Lyapunov drift from time slot $t$ to time slot $t+w$, which represents the increment in the Lyapunov function over the lookahead window of length $w$. 
The second term, i.e., the penalty term, is  the original objective function \eqref{eq:obj-P3'} weighted by a non-negative parameter $V$, which is a parameter to be determined.
However, the term $\Delta(\boldsymbol{\Theta}(t))$ in \eqref{obj:drift_penalty} is still time-coupled, which cannot be solved online.
%it is impossible to solve a problem with the drift-plus-penalty term \eqref{obj:drift_penalty} in an online manner due to the term $\Delta(\boldsymbol{\Theta}(t))$.
To address this issue, an upper bound of the drift-plus-penalty term is minimized instead.
To bound the Lyapunov drift from time slot $t$ to time slot $t+w$, consider
\begin{equation}
  \label{eq:LyaDriftExpan}
  \begin{aligned}
    & L(\boldsymbol{\Theta}(t+w)) - L(\boldsymbol{\Theta}(t)) \\
    = ~ & \frac{1}{2}\sum_{g = 1}^{G} \left(q_{g}^{2}(t+w) -
    q_{g}^{2}(t)       
     + z_{g}^{2}(t + w) -
    z_{g}^{2}(t) \right) .
  \end{aligned}
\end{equation}
Here we assume the arrival of charging demand is bounded above in a single time slot, i.e., $a_{g}(t) \le A_{g}, \forall t = 1, \ldots, T$.
%, where $A_{g}$ is determined by the capacity of the charging service provided by the aggregator, otherwise \eqref{ineq:ag_xg} will not be satisfied.
Since the charging power at every time slot $t$ is also bounded, the following inequality holds for any time slot $t$:
\begin{equation}
    \left( \sum_{\tau=0}^{w-1}(x_{g}(t)-a_{g}(t)) \right)^{2} \le B_{g1},
\end{equation}
which means that the change in the backlog of $q_{g} (t)$ during the lookahead window has a unified bound $B_{g1}$. 
Therefore, the first term in the summation of \eqref{eq:LyaDriftExpan} can be bounded as follows according to \eqref{eq:Qgt}:
\begin{equation}
  \label{ineq:qg_sq_ub}
  \begin{aligned}
    q_{g}^{2}(t+w) & \le \left(q_{g}(t)-
      \sum_{\tau=0}^{w-1}x_{g}(t+\tau) +
    \sum_{\tau=0}^{w-1}a_{g}(t+\tau) \right)^{2}   \\
    & \le q_{g}^{2}(t) +
    \left(\sum_{\tau=0}^{w-1}x_{g}(t+\tau)-\sum_{\tau=0}^{w-1}a_{g}(t+\tau)\right)^{2}
    \\
    & \quad + 2q_{g}(t) \left(\sum_{\tau=0}^{w-1}a_{g}(t+\tau) -
    \sum_{\tau=0}^{w-1}x_{g}(t+\tau) \right)               \\
    & \le  q_{g}^{2}(t) + B_{g1} + 2w Q_{g} A_{g}- 2q_{g}(t)
    \sum_{\tau=0}^{w-1}x_{g}(t+\tau).
  \end{aligned}
\end{equation}
Similarly, we have
\begin{equation}
\label{ineq:zg_sq_ub}
  \begin{aligned}
    z_{g}^{2}(t+w) & \le \left(z_{g}(t) +
      \frac{\alpha_g}{R_g}\mathbb{I}_{g}(t) -
    \sum_{\tau=0}^{w-1}x_{g}(t+\tau) \right)^{2} \\
    &  \le z_{g}^{2}(t) + B_{g2} - 2z_{g}(t) \sum_{\tau=0}^{w-1}x_{g}(t + \tau), 
  \end{aligned}
\end{equation}
where $B_{g2}$ is a constant given by
\begin{equation}
    B_{g2} = \left( \frac{\alpha_g}{R_g}\right)^{2}+ (w X_{g})^{2}+ 2Z_{g}\frac{\alpha_g}{R_g}.
\end{equation}
Therefore, we have
\begin{equation}
\label{ineq:lya_drift_ub}
    \Delta(\boldsymbol{\Theta}(t)) \le B - \sum_{g = 1}^{G} \left((q_{g}(t) + z_{g}(t)) \sum_{\tau=0}^{w-1}x_{g}(t + \tau) \right),
\end{equation}
where 
\begin{equation}
    B = \frac{1}{2} \sum_{g = 1}^{G} \left( B_{g1} + 2w Q_{g} A_{g} +  B_{g2} \right).
\end{equation}

Ignoring the constant terms, the following online optimization
problem can be formulated by replacing $\Delta(\boldsymbol{\Theta}(t))$ in \eqref{obj:drift_penalty} with the right-hand side of \eqref{ineq:lya_drift_ub}:
\begin{subequations}
  \begin{align}
    \textbf{P5:} \!\! \min_{\substack{x_g(t+\tau), \\ \forall g =1, \dots, G, \\ \forall \tau = 0, \dots, w-1}} ~ & V \sum_{\tau=0}^{w-1} f(t+\tau) \nonumber
    \\
    & - \sum_{g=1}^{G}\left( \left(q_{g}(t) + z_{g}(t)\right)
    \sum_{\tau=0}^{w-1}x_{g}(t+\tau)\right) \label{obj:online} \\
    \hbox{s.t.} \quad 
    &  0 \le x_{g}(t+\tau) \le \sum_{v \in \mathcal{S}_g^{\mathrm{act}} (t+\tau)} \!\! \min \left\{ P_v, \frac{E_{v}^{\max}}{\eta \Delta t} \right\} , \nonumber \\ 
    & \forall \tau = 0, \dots, w-1, \forall g = 1, \dots, G. \label{ineq:agg_power_bd_win}
  \end{align}
\end{subequations}

% \textbf{P5} is an online optimization problem with a lookahead    window of $w$ time slots. 
Note that the summation over $t \in \{1, \dots, T\}$ in \textbf{P4} and $\textbf{P4}'$ is removed in \textbf{P5}, thus \textbf{P5} can be solved in an online manner. \textbf{P5} is a linear programming problem, which can be solved efficiently by standard optimization solvers. The online problem \textbf{P5} is further integrated into a receding horizon control framework, and the overall online algorithm is summarized in Algorithm 1.

% {\color{red} add the description of the algorithm here; mention the buffered average policy in Algorithm 1 and update the citation in Proposition 4, Corollary 1, and Proposition 5.}

Now we propose a buffered average policy leveraging the solutions of \textbf{P5}.  Denote the optimal solution of \textbf{P5} at time $t$ by $\hat
    {x}_{g}^{t}(t+\tau),\forall \tau=0,\cdots,w-1$.
The policy implemented    at time $t$ is the buffered average of the policies derived by \textbf{P5} from time $t-w+1$ to $t$, i.e., 
\begin{equation}
\label{eq:buffered_average}
  x_g (t) = \frac{1}{w}        \sum_{\tau=0}^{w-1} \hat{x}_{g}^{(t-\tau)}(t), ~ \forall g = 1, \dots, G.
\end{equation}
The policy \eqref{eq:buffered_average} has desirable properties such as bounded optimality gap and charging delay, which will be revealed by the following propositions.% The  framework of the proposed algorithm with the buffered average policy is presented in Algorithm xx.

\begin{algorithm}[t]
\caption{Forecast-Enhanced Lyapunov Optimization with Buffered Average Policy}
\begin{algorithmic}[1]
\State \textbf{Input:} Price $\pi (t)$, $t = 1, \dots, T$, EV parameters $P_v, E_v^{\mathrm{req}}, E_v^{\max}, T_{v}^{\mathrm{a}}, T_{v}^{\mathrm{d}}$, $v \in \mathcal{S}$
\State \textbf{Initialize:} $w$, $V$, $\alpha_g$ and virtual queues $q_g (1) = z_g (1) = 0$ for every $g = 1, \dots, G$, policy buffer $\mathcal{B} = \emptyset$
\For{$t = 1$ to $T - w + 1$}
    \If{$|\mathcal{B}| = w$} remove the oldest policy from $\mathcal{B}$ 
    \EndIf
    \State Solve \textbf{P5} with $\pi (t+\tau)$, $\tau = 0, \dots, w-1$, get 
    \[\boldsymbol{X} (t) = \left[ \hat{x}_g^{(t)} (t+\tau)  \right]_{g = 1, \dots, G}^{\tau = 0, \dots, w-1} \]
    \State $\mathcal{B} \gets \mathcal{B} \cup \{\boldsymbol{X} (t)\}$, and compute $x_g (t)$ with $\mathcal{B}$ by \eqref{eq:buffered_average}
    \State Update $q_g(t)$ and $z_g(t)$  by \eqref{eq:Qgt} and \eqref{eq:zgt} respectively
    \State Disaggregate $x_g (t)$ to $p_v (t), v \in \mathcal{S}_g$ for every $g = 1, \dots, G$ by FIFO method
\EndFor
\State \textbf{Output:} Power allocation $[p_v (t)]_{v \in \mathcal{S}}^{t = 1, \dots, T}$
\end{algorithmic}
\end{algorithm}

%Another advantage of the proposed framework is that     the backlogs of virtual queues $q_{g}(t)$ and $z_{g}(t)$ can be controlled, which guarantees the maximum charging delay according to Proposition \ref{prp:delay}. To derive the relation between the parameter $V$ and the charging delay, the following lemma is first presented.

{
\setlength{\lineskiplimit}{3pt}
\setlength{\lineskip}{3pt}
In Proposition \ref{prp:delay}, we have proven that all EVs can meet their charging requirements with bounded delay, which shows the feasibility of the proposed algorithm. In the following, we discuss its optimality. Let $\left( x_g^{*}(t) \right)_{g=1,\dots,G}$ be the optimal solution to \textbf{P4} at time $t$, and let $\left( \tilde{x}_g (t) \right)_{g=1,\dots,G}$ be the policy given by \eqref{eq:buffered_average} at time $t$. Moreover, assume $\hat{f}^{(\tau)}(t) = 0$, $\forall \tau \le 0$, $\forall t$.
Then we have the following proposition:
\par
}

% {
% \setlength{\lineskiplimit}{3pt}
% \setlength{\lineskip}{3pt}
% The utilization of future information is expected to enhance the performance of the Lyapunov optimization framework. Specifically, the gap between the performance of \textbf{P4} and that of \textbf{P5} can be bounded if proper charging scheduling strategy is adopted by the aggregator.
% Let $\left\{ x_g^{*}(t): g=1,\dots,G \right\}$ be the optimal solution to \textbf{P4} at time $t$, and let $\left\{ \hat{x}_g^{(t)}(t+\tau): \tau = 0, \dots, w-1; g = 1, \dots, G  \right\}$ be the optimal solution to \textbf{P5} at time $t$.
% Let $f^* (t) = \sum_{g=1}^{G} \pi (t) x_g^{*} (t)$, $\hat{f}^{(t)} (t+\tau) =  \sum_{g=1}^{G} \pi (t+\tau) \hat{x}_g^{(t)} (t+\tau)$.
% Then we have the following proposition:
% \par
% }
\begin{proposition}[Optimality Gap with Forecast]
  \label{prp:optimality}
  The optimality gap between \textbf{P4} and \textbf{P5} is bounded under the buffered average policy \eqref{eq:buffered_average}, i.e.,
  \begin{equation}
    \label{ineq:gap}
   0 \le  f^{\text{ONL}}- f^{\text{OFL}}        \le  \frac{B}{w V},
  \end{equation}
  where $f^{\text{OFL}}$ and $f^{\text{ONL}}$ are the total cost achieved by \textbf{P4} and \textbf{P5} respectively, given by
  \begin{subequations}
  \begin{align}
      f^{\text{OFL}} &= \lim_{T\to \infty} \frac{1}{T} \sum_{t=1}^{T}    \sum_{g=1}^{G} \pi (t) x_g^{*} (t) , \\
      f^{\text{ONL}} 
      &= \lim_{T\to \infty} \frac{1}{T} \sum_{t=1}^{T}    \mathbb{E} \left[ \left. \sum_{g=1}^{G} \pi (t) \tilde{x}_g (t) \right|  \boldsymbol{\Theta}(t) \right]. \nonumber \\
      &= \lim_{T\to \infty} \frac{1}{T} \sum_{t=1}^{T}  \mathbb{E} \left[ \left. \frac{1}{w} \sum_{\tau=0}^{w-1} \hat{f}^{(t-\tau)}(t) \right|  \boldsymbol{\Theta}(t)\right].
  \end{align}
  \end{subequations}
\end{proposition}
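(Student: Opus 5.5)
The plan is the standard drift-plus-penalty comparison argument, adapted to the $w$-slot frame structure and to the buffered average \eqref{eq:buffered_average}.

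\emph{Lower bound.} To show $0 \le f^{\text{ONL}} - f^{\text{OFL}}$, I will check that the buffered average policy is feasible for \textbf{P4}. Each $\hat{x}_g^{(t-\tau)}(t)$ satisfies \eqref{ineq:agg_power_bd_win}, and the feasible set is convex, so the average $\tilde{x}_g(t)$ satisfies \eqref{ineq:agg_power_bd}. Mean-rate stability \eqref{eq:mean_rate_stable} follows from the queue bounds $q_g(t)\le Q_g$ assumed in Proposition \ref{prp:delay}. Then \eqref{ineq:ag_xg_1} gives \eqref{ineq:ag_xg}. Since $f^{\text{OFL}}$ is the infimum of \textbf{P4}, the inequality follows.

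\emph{Upper bound.} I will compare the per-window drift-plus-penalty value of \textbf{P5} against a stationary benchmark. The key steps are as follows.

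First, use that $x_g^*$ solves \textbf{P4} under i.i.d.\ prices and arrivals. A standard argument (Neely's $\omega$-only policy lemma) then gives a randomized stationary policy $x_g^{\omega}$, independent of $\boldsymbol{\Theta}(t)$, with $\mathbb{E}[\sum_\tau f^\omega(t+\tau)] = w f^{\text{OFL}}$ and $\mathbb{E}[\sum_\tau (a_g - x_g^\omega)(t+\tau)] \le 0$. Alternatively, one can simply plug in $x_g^*(t+\tau)$, which is feasible for \eqref{ineq:agg_power_bd_win}.

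Second, since $\hat{x}^{(t)}$ minimizes \eqref{obj:online}, its objective is at most that of the benchmark. Combining this with \eqref{ineq:lya_drift_ub}, where the term $2wQ_gA_g$ already absorbs the arrivals, gives
\begin{equation*}
\Delta(\boldsymbol{\Theta}(t)) + V\,\mathbb{E}\Big[\sum_{\tau=0}^{w-1}\hat f^{(t)}(t+\tau)\Big|\boldsymbol{\Theta}(t)\Big] \le B + V w f^{\text{OFL}} - \sum_g (q_g+z_g)\,\mathbb{E}\Big[\sum_\tau \big(x^\omega_g - a_g\big)\Big] \le B + V w f^{\text{OFL}}.
\end{equation*}

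Third, apply this inequality at every start time $t=1,\dots,T$ and sum. The drift terms telescope along the $w$ interleaved chains $t\equiv r \pmod w$. Since $L\ge 0$ and $L(\boldsymbol{\Theta}(r))=0$ by \eqref{eq:queue_init}, the sum of drifts is at least $-\sum_r \mathbb{E}[L(\boldsymbol{\Theta}(r))]=0$.

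Fourth, exchange the order of summation. We have
\[
\sum_{t}\sum_{\tau=0}^{w-1}\hat f^{(t)}(t+\tau)=\sum_{t}\sum_{\tau=0}^{w-1}\hat f^{(t-\tau)}(t)
\]
up to $O(w)$ boundary terms, which are handled by the convention $\hat f^{(\tau)}=0$ for $\tau\le 0$ and are bounded because prices are bounded. Since $f$ is linear in $x$, the right-hand side equals $w\sum_t \pi(t)\sum_g\tilde x_g(t)$.

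Finally, divide by $VwT$ and let $T\to\infty$. This yields $f^{\text{ONL}} \le f^{\text{OFL}} + B/(wV)$, where the factor $1/w$ comes from the $1/w$ normalization in \eqref{obj:drift_penalty}.

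\emph{Main obstacle.} I expect the main difficulty to be the bookkeeping of the overlapping windows. The queue recursion \eqref{eq:Qgt} is driven by the implemented $\tilde x_g$, whereas \eqref{ineq:lya_drift_ub} was derived for the \textbf{P5} plan $\hat x^{(t)}$. I would try to justify using $\hat x$ inside the drift bound: the constants $B_{g1}, B_{g2}$ depend only on uniform power bounds, so the same constant bound holds for both. The cross term $-(q_g+z_g)\sum_\tau x_g$ is then handled by averaging the inequality over the $w$ plans that contribute to each slot. Getting the constant to be exactly $B/(wV)$, rather than $B/V$, requires dividing the summed per-window inequality consistently by $w$. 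I would fix the normalization there.
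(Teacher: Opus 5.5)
Your skeleton is the paper's proof, step for step. Plug the \textbf{P4} solution $x_g^*(t+\tau)$ into \textbf{P5}; the paper never uses an $\omega$-only policy. Drop $-\sum_g (q_g(t)+z_g(t))\sum_{\tau} x_g^*(t+\tau)\le 0$. Sum over every start time $t$, so that the drift telescopes along the $w$ residue chains with $L(\boldsymbol{\Theta}(1))=\dots=L(\boldsymbol{\Theta}(w))=0$. Re-index with the convention $\hat f^{(\tau)}=0$ for $\tau\le 0$, and divide by $wVT$. The $1/w$ in the bound arises because each slot's cost is counted $w$ times, while each start time contributes a single $B$.

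The step you flag as the main obstacle is where the paper offers no repair. It simply substitutes $\hat x^{(t)}$ into \eqref{ineq:lya_drift_ub}, treating $\hat x^{(t)}$ as if it drove the queues from $t$ to $t+w$. Under \eqref{eq:buffered_average}, however, the queues are driven by $\tilde x$. Your suggested fix, averaging over the $w$ plans, does not close this gap.

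Write $W_g(t)=q_g(t)+z_g(t)$ and $\bar W_g(u)=\frac{1}{w}\sum_{t=u-w+1}^{u}W_g(t)$. Summing \eqref{ineq:lya_drift_ub} with the implemented $\tilde x$ over all start times gives, up to boundary terms,
\[
\sum_{t}\Delta(\boldsymbol{\Theta}(t))\le TB-\sum_{u}\sum_{g}\sum_{s=u-w+1}^{u}\bar W_g(u)\,\hat x_g^{(s)}(u).
\]
Optimality of $\hat x^{(s)}$ in \textbf{P5} instead controls $\sum_{u}\sum_g\sum_s\bigl(V\pi(u)-W_g(s)\bigr)\hat x_g^{(s)}(u)$. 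That expression carries the weight $W_g(s)$ of the planning time, not the window average $\bar W_g(u)$.

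The discrepancy $\sum_u\sum_g\sum_s\bigl(W_g(s)-\bar W_g(u)\bigr)\hat x_g^{(s)}(u)$ is a covariance over the $w$ plans covering slot $u$. Because \textbf{P5} is separable with box constraints, $\hat x_g^{(s)}(u)$ is a bang--bang, nondecreasing function of $W_g(s)$. The discrepancy is therefore nonnegative and cannot be discarded.

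Bounding it needs an ingredient that neither you nor the paper provide: a two-sided bound on the variation of $q_g+z_g$ over $w$ consecutive slots. Lemma~\ref{lem:search_bound} gives only one direction, and only for the $q_g$ recursion. Even with such a bound, this route yields $B/(wV)$ plus an extra term, not $B/(wV)$ itself.

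Three smaller points.

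\emph{The $\omega$-only benchmark.} It is not available here. The demand $a_g(t)$ from \eqref{eq:agvt-lb} spreads each EV's demand over consecutive slots, and the caps in \eqref{ineq:agg_power_bd} depend on the active set, so the random state is not i.i.d. Moreover, $f^{\text{OFL}}$ is defined through the pathwise $x^*$ anyway.

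\emph{The cross term with $x^*$.} With $x^*$ as benchmark you must use $-\sum_g(q_g+z_g)\sum_\tau x_g^*(t+\tau)$, as the paper does; arrivals are already absorbed into $B$ via $2wQ_gA_g$. Your version with $x_g-a_g$ fails for $x^*$, because $\sum_\tau (x_g^*-a_g)(t+\tau)$ over a single window can be negative.

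\emph{The lower bound.} The paper does not prove $0\le f^{\text{ONL}}-f^{\text{OFL}}$ at all, so your feasibility argument is a sensible addition. The queue bounds it needs come from Proposition~\ref{prp:queue_bound}; Proposition~\ref{prp:delay} merely assumes them. The argument also presumes that the infimum in \textbf{P4} ranges over all, possibly anticipative, schedules, because \eqref{eq:buffered_average} uses $w$-slot lookahead.
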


The proof of Proposition \ref{prp:optimality} can be found in Appendix \ref{appd:optimality}. It is revealed by \eqref{ineq:gap} that the performance gap decreases as the lookahead window $w$ increases, which demonstrates the benefit of utilizing future information. However, a larger $w$ may lead to a larger maximum charging delay according to \eqref{ineq:delay_bound}, which indicates a trade-off between the cost minimization and charging delay. Hence, the length of the lookahead window should be carefully chosen in practice.

\begin{proposition}[Uniform Bounds of Virtual Queues]
\label{prp:queue_bound}
Under the buffered average policy \eqref{eq:buffered_average}, for any $t \in [1, 2, \dots, T]$, there exists an upper bound of $q_g(t)$ and $z_g(t)$ respectively, given by
\begin{align}
    q_g(t) & \le Q_g = V\pi_{\max} + 2wX_g, \label{ineq:qg_bound} \\
    z_g(t) & \le Z_g = V\pi_{\max} + \frac{2 \alpha_g}{R_g}, \label{ineq:zg_bound}
\end{align}
where $X_g = \sum_{v \in \mathcal{S}_g} P_{v}$.
\end{proposition}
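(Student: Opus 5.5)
We argue by induction on the window-step recursion \eqref{eq:Qgt}--\eqref{eq:zgt}, using the structure of the linear program \textbf{P5}. By \eqref{eq:queue_init}, both queues are zero for $t=1,\dots,w$. So it suffices to show that if $q_g(t)\le Q_g$ and $z_g(t)\le Z_g$, then the same bounds hold at $t+w$.

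The key structural fact comes from the objective \eqref{obj:online}. It is separable across $g$ and $\tau$, and the coefficient of $\hat x_g^{(t)}(t+\tau)$ is $V\pi(t+\tau)-(q_g(t)+z_g(t))$. Hence if $q_g(t)+z_g(t)> V\pi_{\max}$, every coefficient is negative. \textbf{P5} then charges group $g$ at its upper bound \eqref{ineq:agg_power_bd_win} in every slot of the window solved at time $t$. Conversely, if $q_g(t)+z_g(t)<V\pi_{\min}$, that window charges zero.

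\textbf{Bound on $q_g$.} Split into two cases.
\begin{itemize}
\item If $q_g(t)\le V\pi_{\max}$, then \eqref{eq:Qgt} gives $q_g(t+w)\le q_g(t)+\sum_{\tau}a_g(t+\tau)\le V\pi_{\max}+wA_g$. Since each $a_v(t)\le P_v$, we have $A_g\le X_g$, so this is at most $V\pi_{\max}+wX_g\le Q_g$.
\item If $V\pi_{\max}<q_g(t)\le Q_g$, then the \textbf{P5} instance at time $t$ charges at full available power over $[t,t+w-1]$. The implemented $x_g$ is the buffered average \eqref{eq:buffered_average}, which also mixes in solutions from earlier windows $t-w+1,\dots,t-1$. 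I would handle this by looking back one more window. Because the queue changes by at most $wX_g$ per window, if $q_g(t)>V\pi_{\max}+wX_g$ then all the earlier windows overlapping $[t,t+w-1]$ also had coefficients above $V\pi_{\max}$. All averaged policies are then at full power, so the implemented $x_g$ covers the arriving demand and the queue does not grow: $q_g(t+w)\le q_g(t)$. The remaining range $q_g(t)\in(V\pi_{\max},V\pi_{\max}+wX_g]$ grows by at most $wX_g$ in one window. Together this gives $q_g(t+w)\le V\pi_{\max}+2wX_g$.
\end{itemize}
The argument needs demand admitted in the window to be servable by the active EVs' power, i.e.\ $\sum_\tau a_g\le\sum_\tau$ (cap in \eqref{ineq:agg_power_bd_win}). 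This follows from the construction \eqref{eq:agvt-lb}, since each EV's demand is front-loaded at rate $P_v$ inside $\mathcal{U}_v$.

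\textbf{Bound on $z_g$.} The argument is parallel. If $z_g(t)\le V\pi_{\max}+\alpha_g/R_g$, one window adds at most $\alpha_g/R_g$, giving at most $V\pi_{\max}+2\alpha_g/R_g$. If $z_g(t)>V\pi_{\max}+\alpha_g/R_g$, then $q_g+z_g>V\pi_{\max}$, so all buffered policies charge at full power. When $\mathbb{I}_g(t)=1$ there is positive backlog, so the served amount $\sum_\tau x_g(t+\tau)$ is at least $\alpha_g/R_g$ (after choosing $\alpha_g$ suitably small relative to the minimal per-window service). Then $z_g(t+w)\le z_g(t)$. When $\mathbb{I}_g(t)=0$, $z_g$ cannot increase at all.

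\textbf{Main obstacle.} The delicate step is the buffered average. The implemented $x_g(t)$ mixes decisions made from states $\boldsymbol{\Theta}(t-\tau)$, not only $\boldsymbol{\Theta}(t)$. So the ``full power when the queue is large'' property must be transferred across $w$ consecutive solves. I would first try the one-window slack argument sketched above: the per-window variation of $q_g$ is at most $wX_g$, which is exactly where the extra $wX_g$ in $Q_g=V\pi_{\max}+2wX_g$ should come from. If that slack is insufficient, I would instead bound the queue at the intermediate time indices directly and apply the threshold argument to each buffered solution separately.
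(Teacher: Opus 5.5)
Your plan has the same skeleton as the paper's proof in Appendix~\ref{appd:queue_bound}:
\begin{itemize}
\item induction over window boundaries $t\mapsto t+w$;
\item a threshold at $V\pi_{\max}+wX_g$, below which one window adds at most $wX_g$;
\item above the threshold, every solve entering \eqref{eq:buffered_average} is saturated, because its coefficients $V\pi(\cdot)-(q_g+z_g)$ are negative.
\end{itemize}
Your closing step is sound: with $c_g$ the right-hand side of \eqref{ineq:agg_power_bd_win}, you use $\sum_\tau a_g(t+\tau)\le\sum_\tau c_g(t+\tau)$ to get $q_g(t+w)\le q_g(t)$. It even avoids the paper's identification of that cap with $X_g$.

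\textbf{The gap in the $q_g$ part.} The step that carries the whole argument is that $q_g(t)>V\pi_{\max}+wX_g$ forces $q_g(t-\tau)>V\pi_{\max}$ for $\tau=1,\dots,w-1$. You justify it by ``the queue changes by at most $wX_g$ per window,'' but that does not deliver it. \eqref{eq:Qgt} only links indices congruent modulo $w$. So it controls $q_g(t)-q_g(t-w)$, but says nothing directly about $q_g(t-\tau)$ for $0<\tau<w$, which lives in a different one of the $w$ interleaved sub-queues.

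The paper proves exactly this comparison as Lemma~\ref{lem:search_bound}. Unrolling gives $q(t)=\max_{0\le k\le n}\sum_{i=kw+r}^{t-1}d(i)$, with $d=a-x$ and $t=nw+r$. Each candidate partial sum for $q(t)$ splits into a candidate partial sum for $q(t-\tau)$ plus exactly $w$ leftover terms, each at most $X$. Hence $q(t-\tau)\ge q(t)-wX$. This representation, or some equivalent cross-residue comparison, is what your proposal is missing.

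There is a second point in the same step. For $s\ge 1$, $x_g(t+s)$ also averages solves made at times $t+1,\dots,t+s$, so you need the large-queue condition at those later times too. Your sentence only covers the earlier windows. The paper likewise asserts this without a separate argument.

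\textbf{The $z_g$ part.} Your argument has two further problems.
\begin{itemize}
\item It adds a hypothesis that is not in the statement: $\alpha_g$ small relative to the per-window service.
\item The inference ``$z_g(t)>V\pi_{\max}+\alpha_g/R_g$, so all buffered policies charge at full power'' only controls the solve at time $t$. The other solves depend on $q_g+z_g$ at other residues, and Lemma~\ref{lem:search_bound} does not transfer to $z_g$ with slack $\alpha_g/R_g$.
\end{itemize}
To see why the transfer fails, unroll \eqref{eq:zgt}. The service part becomes a contiguous sum of $x_g$, which compares fine. But the increments $\frac{\alpha_g}{R_g}\mathbb{I}_g(\cdot)$ are counted along different residue classes, and those counts need not agree. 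The comparison error is therefore $\frac{\alpha_g}{R_g}$ times their difference, not a single $\frac{\alpha_g}{R_g}$.

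If you keep an extra assumption anyway, use only the time-$t$ solve. It alone contributes $\frac{1}{w}\sum_{s=0}^{w-1}c_g(t+s)$ to $\sum_{s=0}^{w-1}x_g(t+s)$. Requiring this quantity to be at least $\alpha_g/R_g$ whenever $\mathbb{I}_g(t)=1$ gives $z_g(t+w)\le z_g(t)$ with no transfer argument at all. Since the paper dismisses this half as ``similar,'' you should state explicitly whatever assumption you rely on.
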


The proof of Proposition \ref{prp:queue_bound} can be found in Appendix \ref{appd:queue_bound}.
Both $q_g(t)$ and $z_g(t)$ are uniformly bounded for all time slots by Proposition \ref{prp:queue_bound}. As a result, the maximum charging delay in terms of $V$ and $\alpha$ can be derived based on \eqref{ineq:delay_bound}, \eqref{ineq:qg_bound} and \eqref{ineq:zg_bound}.

\begin{corollary}
\label{cor:delay}
Under the buffered average policy \eqref{eq:buffered_average}, the maximum charging delay of group $g$ is bounded by
\begin{equation}
    D_g < \frac{w R_g}{\alpha_g} \left( 2 V \pi_{\max} + 2 w X_g + \frac{2 \alpha_g}{R_g} \right).
\end{equation}
\end{corollary}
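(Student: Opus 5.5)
The corollary follows by direct substitution of Proposition \ref{prp:queue_bound} into Proposition \ref{prp:delay}. First, Proposition \ref{prp:queue_bound} shows that, under the buffered average policy \eqref{eq:buffered_average}, the virtual queues are uniformly bounded for all $t$. The bounds are $q_g(t)\le Q_g = V\pi_{\max}+2wX_g$ and $z_g(t)\le Z_g = V\pi_{\max}+2\alpha_g/R_g$. So the hypothesis of Proposition \ref{prp:delay} holds with exactly these constants $Q_g$ and $Z_g$. I would also check that Proposition \ref{prp:delay} uses only the recursions \eqref{eq:Qgt}--\eqref{eq:zgt} and the FIFO service order. Both are still in force under the buffered average policy, since $x_g(t)$ is simply whatever power is applied and the queues are updated from it in Algorithm 1.

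Second, I would invoke \eqref{ineq:delay_bound}:
\begin{equation*}
D_g < \frac{wR_g(Q_g+Z_g)}{\alpha_g},
\end{equation*}
and substitute the two bounds. Their sum is
\begin{equation*}
Q_g+Z_g = 2V\pi_{\max} + 2wX_g + \frac{2\alpha_g}{R_g}.
\end{equation*}
Inserting this gives the stated inequality. The strict inequality carries over unchanged, because the substitution only enlarges the right-hand side: Proposition \ref{prp:delay} is monotone in $Q_g$ and $Z_g$.

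There is no real obstacle. The only point needing care is that the bounds from Proposition \ref{prp:queue_bound} hold for every $t$, including the initialization window \eqref{eq:queue_init}, where the queues are zero. This is what makes them valid uniform constants for Proposition \ref{prp:delay}.
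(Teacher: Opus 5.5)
Your proposal is correct and matches the paper's argument: the corollary follows by substituting $Q_g = V\pi_{\max}+2wX_g$ and $Z_g = V\pi_{\max}+2\alpha_g/R_g$ from Proposition~\ref{prp:queue_bound} into \eqref{ineq:delay_bound}, which gives $Q_g+Z_g = 2V\pi_{\max}+2wX_g+2\alpha_g/R_g$. Your monotonicity remark is unnecessary, since the constants are substituted exactly rather than enlarged, but it does no harm.
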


\subsection{Generalization of \textbf{P5}}

Note that the penalty parameter $V$ in \eqref{obj:drift_penalty} is a scalar. Alternatively, heterogeneous penalty parameters can be used for different groups, by which the performance of the proposed framework may be further improved. For simplicity, collect $V_g$, $q_g(t)$, $z_g(t)$ and $x_g(t)$ of each group $g$ into vectors $\boldsymbol{V}$, $\boldsymbol{q}(t)$, $\boldsymbol{z}(t)$ and $\boldsymbol{x}(t)$ respectively. Then \textbf{P5} can be reformulated as follows:
\begin{align}
    \textbf{P5$'$:} \min_{\substack{\boldsymbol{x}(t+\tau), \\ \forall \tau = 0, \dots, w-1}} 
    & \sum_{\tau=0}^{w-1} \left( \pi(t+\tau) \boldsymbol{V} - \boldsymbol{q}(t) - \boldsymbol{z}(t) \right)^\top \boldsymbol{x}(t+\tau) ,\\
    % \right. \nonumber \\ 
    % & \left. - (\boldsymbol{q}(t) + \boldsymbol{z}(t))^\top \boldsymbol{x}(t+\tau) \right]. \\
    \hbox{s.t.} \quad & \eqref{ineq:agg_power_bd_win}. \nonumber
\end{align}

\begin{proposition}[Optimality Gap with Heterogeneous Penalty]
  \label{prp:optimality_alt}
  If the policy implemented    at time $t$ is given by \eqref{eq:buffered_average},  then the gap between \textbf{P4} and $\textbf{P5}'$ is bounded by
  \begin{equation}
    \label{ineq:gap_alt}
        f^{\text{ONL}}- f^{\text{OFL}} \le  \frac{B}{w V_{\min}},
  \end{equation}
  where $V_{\min} = \min_g \{V_g: g = 1, \dots, G\}$.
\end{proposition}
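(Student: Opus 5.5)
The plan is to follow the standard drift-plus-penalty comparison used for Proposition \ref{prp:optimality}, carrying the penalty weights per group instead of a common scalar $V$.

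\textbf{Step 1: heterogeneous drift-plus-penalty bound.} Define the heterogeneous drift-plus-penalty expression
\[
\frac{1}{w}\Delta(\boldsymbol{\Theta}(t)) + \frac{1}{w}\mathbb{E}\Big[\sum_{\tau=0}^{w-1}\pi(t+\tau)\boldsymbol{V}^\top \boldsymbol{x}(t+\tau)\,\Big|\,\boldsymbol{\Theta}(t)\Big].
\]
Using \eqref{ineq:lya_drift_ub}, which does not depend on $V$, this is at most $B/w$ plus $\frac{1}{w}$ times the objective of $\textbf{P5}'$. Since $\textbf{P5}'$ is minimized over the same constraint set \eqref{ineq:agg_power_bd_win}, its minimizer $\hat{\boldsymbol{x}}^{(t)}$ gives a value no larger than that of any feasible comparison policy.

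\textbf{Step 2: comparison policy.} Take the comparison to be the stationary, queue-independent optimal policy of \textbf{P4}, namely $x_g^*$. Because it does not depend on the queues, $\mathbb{E}[q_g(t)+z_g(t)]$ factors out of the cross term. Constraint \eqref{ineq:ag_xg} then makes $\mathbb{E}[\sum_\tau (a_g - x_g^*)] \le 0$, so the queue terms can be dropped at the cost of the constant already absorbed in $B$. This yields
\[
\frac{1}{w}\Delta + \frac{1}{w}\mathbb{E}\Big[\sum_\tau \sum_g V_g\pi(t+\tau)\hat{x}_g^{(t)}(t+\tau)\Big] \le \frac{B}{w}+\frac{1}{w}\sum_\tau\sum_g V_g\,\mathbb{E}[\pi(t+\tau)x_g^*(t+\tau)].
\]

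\textbf{Step 3: telescoping and averaging.} Sum over $t=1,\dots,T$ and telescope the drift terms, using $L\ge 0$ and the initial conditions \eqref{queue_init}. Then regroup the window sums so that each slot $t$ collects $\hat f^{(t-\tau)}(t)$ for $\tau=0,\dots,w-1$; by \eqref{eq:buffered_average} this is exactly $w$ times the buffered-average cost. Boundary terms vanish as $T\to\infty$ under the convention $\hat f^{(\tau)}=0$ for $\tau\le 0$. Dividing by $T$ then gives a per-group weighted inequality:
\[
\sum_g V_g\big(f_g^{\text{ONL}}-f_g^{\text{OFL}}\big)\le \frac{B}{w}.
\]

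\textbf{Step 4 (main obstacle): passing to the unweighted gap.} The weighted inequality must be converted into a bound on $\sum_g (f_g^{\text{ONL}}-f_g^{\text{OFL}})$ with $V_{\min}$ in the denominator. This only works if each per-group difference is nonnegative, or at least if the negative parts can be controlled.

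I would argue nonnegativity per group. Fix group $g$ and set the other groups' decisions to their \textbf{P4}-optimal values. Because \textbf{P4} decouples across $g$ (both the constraints and the cost are separable), each group's online schedule is itself feasible in the limit for that group's subproblem. Feasibility follows from Proposition \ref{prp:queue_bound}: the queues are uniformly bounded, which gives mean-rate stability and hence \eqref{ineq:ag_xg} by \eqref{ineq:ag_xg_1}. Therefore $f_g^{\text{ONL}}\ge f_g^{\text{OFL}}$ for every $g$.

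With nonnegativity in hand,
\[
V_{\min}\sum_g\big(f_g^{\text{ONL}}-f_g^{\text{OFL}}\big)\le \sum_g V_g\big(f_g^{\text{ONL}}-f_g^{\text{OFL}}\big)\le \frac{B}{w},
\]
which is \eqref{ineq:gap_alt}.

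If the per-group separability argument fails to be rigorous, the fallback is to bound the weighted sum from below by $V_{\min}$ times the unweighted gap, after dividing through by $V_{\min}$ and absorbing the differences in $V_g$ into $B$. That route would give a slightly weaker constant, but of the same form.
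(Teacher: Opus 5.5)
Your proposal follows the paper's proof essentially step for step: the heterogeneous drift-plus-penalty inequality compared against $x^*$, summation and telescoping over $t$, division by $wT$, and the passage from $\sum_g V_g\bigl(f_g^{\text{ONL}}-f_g^{\text{OFL}}\bigr)\le B/w$ to the $V_{\min}$ bound. (In the comparison step, with $B$ as constructed the cross term $-(q_g(t)+z_g(t))\sum_{\tau}x_g^*(t+\tau)$ is simply nonpositive, so your stationarity argument is not needed.)

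The only real difference is that you justify the final passage through per-group nonnegativity $f_g^{\text{ONL}}\ge f_g^{\text{OFL}}$, which the paper's appendix uses without comment. You get it from the separability of \textbf{P4} together with per-group feasibility of the online schedule, via the queue bounds of Proposition \ref{prp:queue_bound} with $V_g$ in place of $V$. That argument is sound, so the fallback you sketch is unnecessary; as stated, it would not reproduce the constant $B/(wV_{\min})$ anyway.
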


The proof of Proposition \ref{prp:optimality_alt} can be found in Appendix \ref{appd:optimality_alt}.
Proposition \ref{prp:optimality_alt} indicates that the performance gap between \textbf{P4} and $\textbf{P5}'$ is determined by the minimum penalty parameter among all groups. The bound given by \eqref{ineq:gap_alt} will be reduced to that of \eqref{ineq:gap} if $V_g = V, \forall g$.    

To achieve the best performance under the heterogeneous penalty setting, $V_g$ should be chosen as the value just enough to complete the charging task for each group $g$. In other words, any $V' > V_g$ cannot satisfy the charging demand of group $g$. The optimal choices of $V_g$ with different lookahead window lengths will be discussed in Section \ref{sec:result}. 

\section{Case Studies}
\label{sec:result}

The proposed algorithm is validated and its
performance is evaluated
by numerical simulations based on real-world data.

\subsection{Simulation Setup}

Case studies are conducted by a desktop computer with Intel Core i5-10505 with 16 GB RAM and implemented in MATLAB. A case with 100 EVs is considered in the simulation. The arriving time of EVs are set based on the real-world data of EVs in Shanghai \cite{DuImpact2025}. A typical scenario of a working day is considered, where most EVs arrive at the charging station around 8:00 am and leave around 6:00 pm. The simulation horizon is set to 24 hours with a time slot length of 5 minutes. 
The initial SOCs of EVs range from 25\% to 80\% and their maximum charging power ranges from 150 kW to 350 kW. The charging efficiency is set to 0.95. The target SOC of each EV is 80\%, i.e., the SOC of each EV should be no less than 80\% before it leaves the charging station. Their parking time is set sufficiently large, which guarantees that charging demand can be satisfied at its maximum charging power within their parking duration.
The electricity price is obtained from the California Independent System Operator
(CAISO) \cite{Grid2025}.

The proposed algorithm with homogeneous and heterogeneous penalty parameters are implemented respectively, and the performance of the proposed algorithm is compared with three benchmark algorithms. The details of all algorithms are as follows:
\begin{itemize}
    \item \textbf{Greedy Algorithm}: At each time slot, the charging power of each group is set to its maximum value, i.e., the upper bound given by \eqref{ineq:agg_power_bd}.
    \item \textbf{MPC}: At each time slot, an online optimization problem \textbf{P6} over a lookahead window is solved based on the accurate predictions within the window, given by    
   % \begin{subequations}
    \begin{align}
      \textbf{P6:} & \min_{\substack{p_v(t+\tau), \\\forall v \in \mathcal{S}, \\ \forall \tau = 0, \dots, w-1}} ~  \frac{1}{w} \sum_{\tau=0}^{w-1} f(t+\tau), \nonumber\\
      \hbox{s.t.} \quad 
      & 0 \le p_{v}(t+\tau) \le P_{v}, \forall t+\tau \in \mathcal{U}_v, \forall v \in \mathcal{S}, \nonumber\\
      & p_{v}(t+\tau) =0, \forall t+\tau \notin \mathcal{U}_v, \forall v \in \mathcal{S},        \nonumber  \\
      & e_{v}(t+\tau+1) = e_{v}(t+\tau) + \eta p_{v}(t+\tau) \Delta t, \nonumber  \\
      & \forall \tau = 0, \dots, w-1, \forall v \in \mathcal{S}, \nonumber\\
      & e_{v}(t+\tau) \le E_{v}^{\mathrm{cap}},  \forall \tau = 0, \dots, w-1, \forall v \in \mathcal{S}, \nonumber\\
      & e_{v} \left(T_v^{\mathrm{d}}\right) \ge E_{v}^{\mathrm{req}}, \nonumber  \\
      & \text{if } t \le T_v^{\mathrm{d}} \le t+w-1, \forall v \in \mathcal{S},
    \end{align}
   % \end{subequations}
    where $e_v (t)$ is the energy level of EV $v$ at time slot $t$.
    The initial conditions are given by $e_v (1) = E_v^{\mathrm{a}}$ for every $v \in \mathcal{S}$. 
    Denote the optimal solution of \textbf{P6} by $\left[ \hat{p}_v (t+\tau) \right]_{v \in \mathcal{S}}^{\tau = 0, \dots, w-1}$, then only the policy at the first step $(\hat{p}_v(t))_{v \in \mathcal{S}}$ is implemented.
    \item \textbf{Offline Algorithm}: The offline optimal solution of \textbf{P2} is obtained by solving \textbf{P2} with perfect information over the entire horizon.
    \item \textbf{Proposed Algorithm with Heterogeneous Penalty (A1)}: The proposed forecast-enhanced Lyapunov optimization framework is implemented with heterogeneous penalty parameters, i.e., solving $\textbf{P5}'$.
    \item \textbf{Proposed Algorithm with Homogeneous Penalty (A2)}: The proposed forecast-enhanced Lyapunov optimization framework is implemented with a homogeneous penalty parameter, i.e., solving \textbf{P5}.
\end{itemize}
Note that A1 and A2 share the same length of lookahead window with the MPC method. A2 reduces to the standard Lyapunov optimization method when $w=1$.

\subsection{Validating and Benchmarking the Proposed Algorithm}

The maximum charging delay of all EVs under the proposed algorithm with different lengths of lookahead window are presented in Table \ref{tab:delay}.  
%The theoretical bounds of maximum charging delay derived in \eqref{ineq:delay_bound} are also presented in Table \ref{tab:delay} for comparison. 
The results show that the maximum charging delay of all EVs under different lengths of lookahead window are all lower than the theoretical bounds given by Proposition \ref{prp:delay}, which demonstrates the effectiveness of the proposed framework in controlling the charging delay of EVs.
The average maximum charging delay of all EVs under different lengths of lookahead window are also presented in the second-to-last column of Table \ref{tab:delay} for reference.

\begin{figure}[htbp]
    \centering
    \subfigure[]
    {\includegraphics[width=4.2cm]{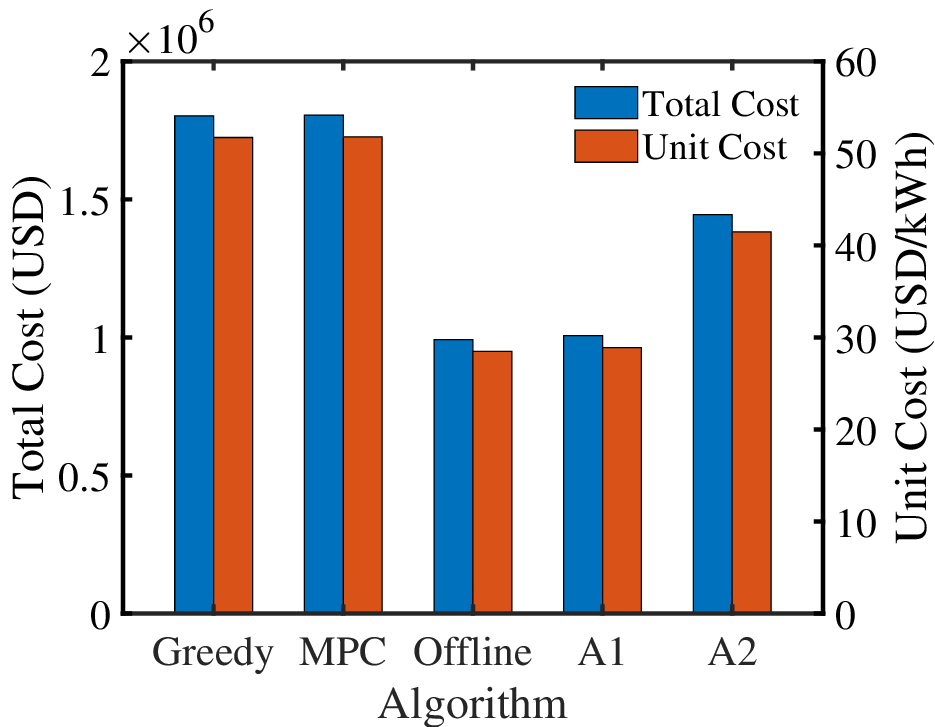} \label{fig:cost}}
    \subfigure[]
    {\includegraphics[width=4.2cm]{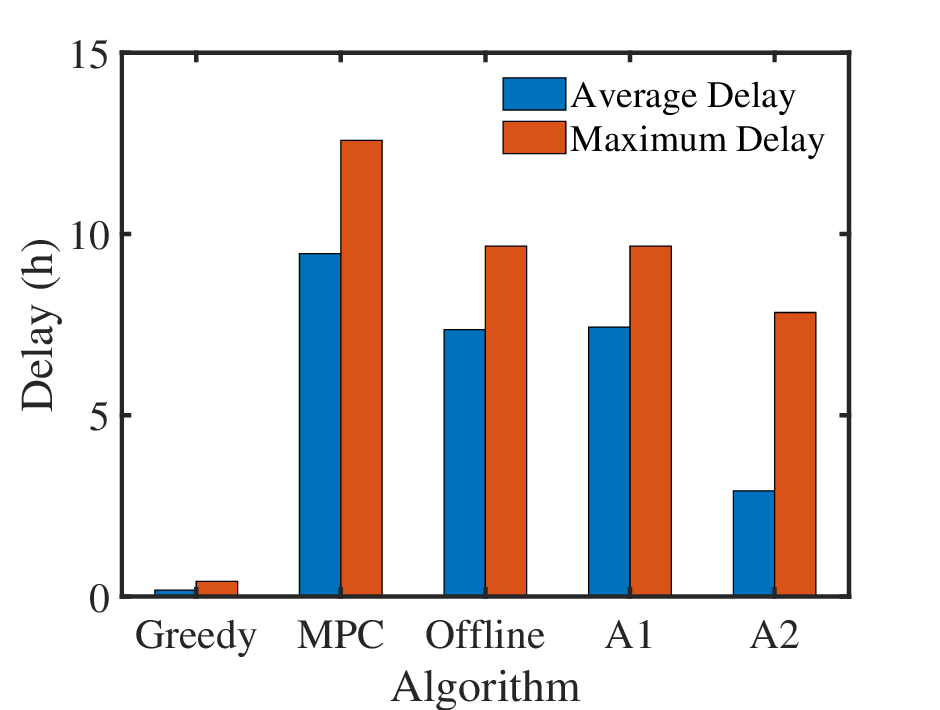} \label{fig:delay}}
    \caption{(a) Cost of the proposed algorithm and benchmarks. (b) Charging delay of the proposed algorithm and benchmarks.}
\end{figure}

\begin{table}[t]
    \centering
    \caption{Maximum charging delay (unit: hour) ($V=10$) of EVs.} 
    \label{tab:delay}
    \begin{tabular}{ccccccccccc}
    \toprule 
    & Delay of & Delay of & Average & $<$ bound \\
    & Group 1 & Group 2 & Delay &  derived in \eqref{ineq:delay_bound}? \\ 
    \midrule 
    $w = 1$ &  0.17 & 2.00 & 1.19 & Yes \\ 
   % & Bound & $1.39 \times 10^3$ & $3.01 \times 10^3$ & --- \\
   % \midrule
    $w = 2$ &  0.25 & 4.17 & 1.57  & Yes\\ 
   % & Bound & $3.30 \times 10^3$ & $4.69 \times 10^3$ \\
    %\midrule
    $w = 5$ &  0.42 & 4.75 & 1.67  & Yes \\ 
  %  & Bound & $9.69 \times 10^3$ & $7.90 \times 10^3$ & --- \\
    %\midrule
    $w = 8$ & 0.50 & 6.92 & 2.07  & Yes\\ 
   % & Bound & $1.55 \times 10^4$ & $1.78 \times 10^4$ & --- \\
    %\midrule
    $w = 12$ &  0.58 & 7.75 & 2.63  & Yes\\ 
   % & Bound & $2.33 \times 10^4$ & $1.67 \times 10^4$ & --- \\
    \bottomrule
    \end{tabular} 
\end{table}

The total cost and maximum charging delay of all EVs under different algorithms are illustrated in Fig. \ref{fig:cost} and Fig. \ref{fig:delay} respectively. It can be observed from Fig. \ref{fig:cost} that the proposed algorithms A1 and A2 achieve lower costs than the greedy algorithm and MPC. Another finding from case studies is a limitation of MPC that the proposed algorithm does not have, that is, MPC cannot guarantee the satisfaction of charging demand of all EVs if the lookahead window is not sufficiently large, since MPC tends to be inactive before the arrivals of charging demand enters the lookahead window.
Therefore, only the results of MPC with a large lookahead window ($w=12$) are compared with the proposed algorithm.

A1 outperforms A2 remarkably since it has more flexibility in tuning the penalty parameters, which can achieve costs extremely close to the offline optimal cost. Specifically, the gap between the total cost of A1 and the offline optimal cost is only 1\%, while that of A2 is 46\%, both of which are lower than the theoretical bound given by Proposition \ref{prp:optimality} and Proposition \ref{prp:optimality_alt}.
From Fig. \ref{fig:delay}, it can be observed that both the average and the maximum charging delay of EVs under A1 and A2 are much lower than those results of MPC, which indicates the advantages of the proposed framework in controlling the charging delay of EVs.
In addition, the maximum charging delay under A1 is higher than that under A2, which is consistent with the trade-off between cost minimization and quality of service revealed in Section \ref{sec:lya}.
The simulation results of all algorithms are summarized in Table \ref{tab:benchmark}.

% \begin{table*}[!htbp]
%     \centering
%     \caption{Cost and delay comparison of algorithms.}
%     \label{tab:benchmark}
%     \begin{tabular}{@{}ccccccccc@{}}
%         \toprule 
%         \multirow{2}{*}{Algorithm} & Total cost    & \multirow{2}{*}{Relative value} & Unit cost & Average delay & Maximum delay \\
%         & (Million USD) & & (USD/kWh) & (h) & (h) \\
%         \midrule 
%         Greedy  & 1.80 & 182\% & 51.7  & 0.18 & 0.42  \\
%         MPC     & 1.81 & 182\% & 51.8  & 9.46 & 12.6  \\
%         Offline & 0.99 & 100\% & 28.5  & 7.36 & 9.67  \\
%         A1      & 1.01 & 101\% & 28.9  & 7.43 & 9.67 \\
%         A2      & 1.44 & 146\% & 41.5  & 2.91 & 7.83  \\
%         \bottomrule
%     \end{tabular}
% \end{table*}

\begin{table}[!htbp]
    \centering
    \caption{Cost and delay comparison of algorithms.}
    \label{tab:benchmark}
    \begin{tabular}{@{}ccccccccc@{}}
        \toprule 
        Algorithm & Greedy & MPC & Offline & A1 & A2 \\
        \midrule
        Total cost (Million USD)  & 1.80 & 1.81 & 0.99 & 1.01 & 1.44 \\
        Relative cost    & 182\% & 182\% & 100\% & 101\% & 146\% \\
       Unit cost (USD/kWh) & 51.7 & 51.8 & 28.5 & 28.9 & 41.5 \\
    Average delay (h) & 0.18 &9.46 & 7.36 & 7.43 & 2.91 \\
    Maximum delay (h) & 0.42 & 12.6 & 9.67 & 9.67 & 7.83 \\
        \bottomrule
    \end{tabular}
\end{table}

% \begin{figure}[htbp]
%     \centering
%     \subfigure[]
% {\includegraphics[width=4.2cm]{Figs/soc_ag1_g7.eps} \label{fig:soc}}
%     \subfigure[] {\includegraphics[width =
% 4.2cm]{Figs/qg_hat_ag1.eps} \label{fig:q}}
%     \subfigure[] {\includegraphics[width =
% 4.2cm]{Figs/zg_hat_ag1.eps} \label{fig:z}}
%     \subfigure[] {\includegraphics[width =
% 4.2cm]{Figs/price_ag1.eps} \label{fig:price}}
%     \caption{Results of simulation for one aggregator:
% {\color{black} (a) The SOC traces of three EVs in a group (red
% dash line: requested SOC threshold; yellow dash line: arrival
% time; grey dot line: departure time); (b) The queues of
% charging demands of three groups; (c) The virtual queues of three groups;}
%     (d) The electricity price.}
%     \label{fig:agg}
% \end{figure}

\subsection{Parametric Studies}

\begin{figure}[htbp]
    \centering
    \subfigure[]{
      \includegraphics[width=4.1cm]{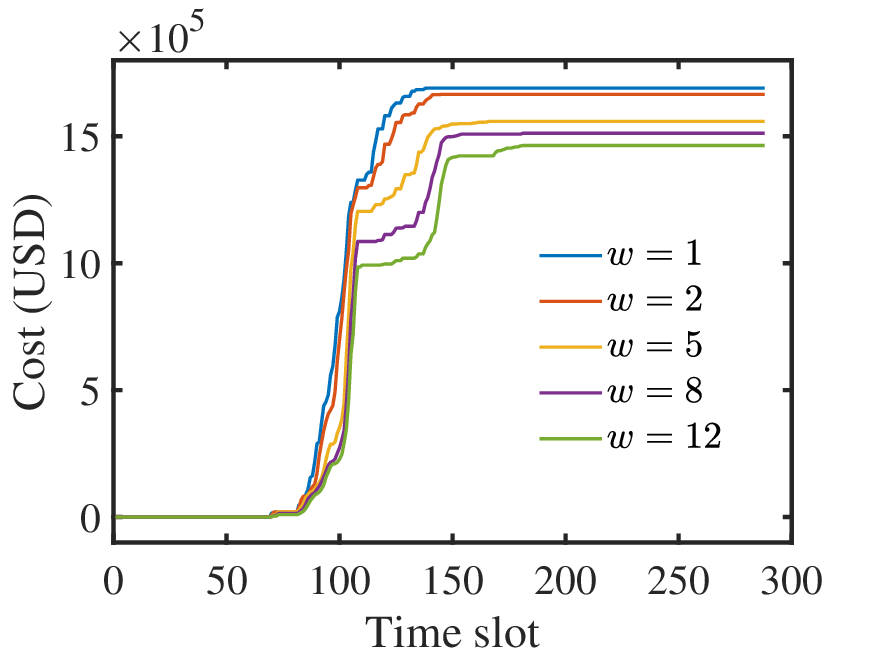} 
      \label{fig:cost_tr_win_fixed_V}
    }
    \subfigure[]{
      \includegraphics[width=4.1cm]{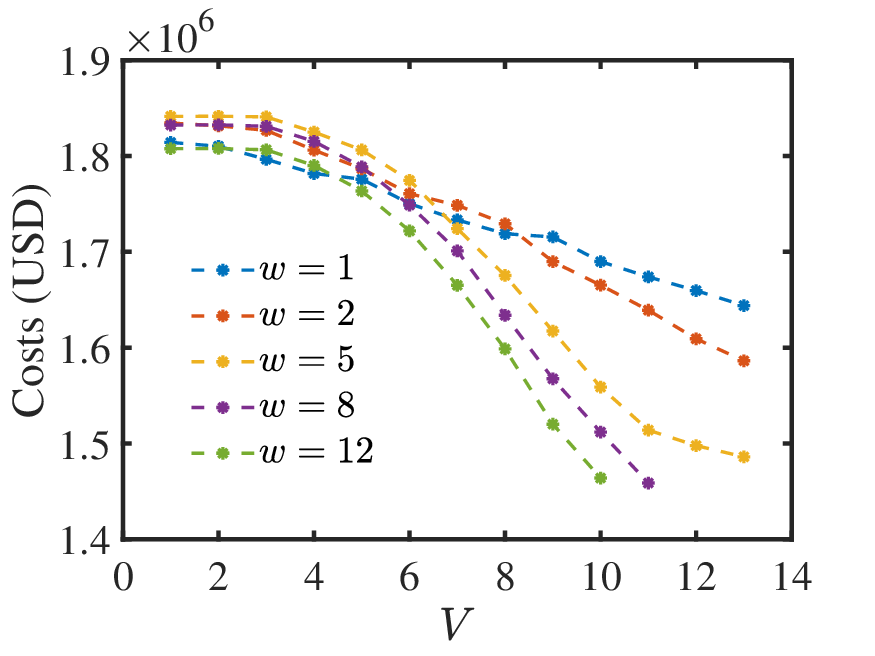}
      \label{fig:cost_win_V}
    }
    \caption{(a) The cost traces against window length ($V = 10$). (b) The costs  against $V$ with each window length.}
\end{figure}

\begin{figure}[htbp]
    \centering
    \subfigure[]{
      \includegraphics[width=4.1cm]{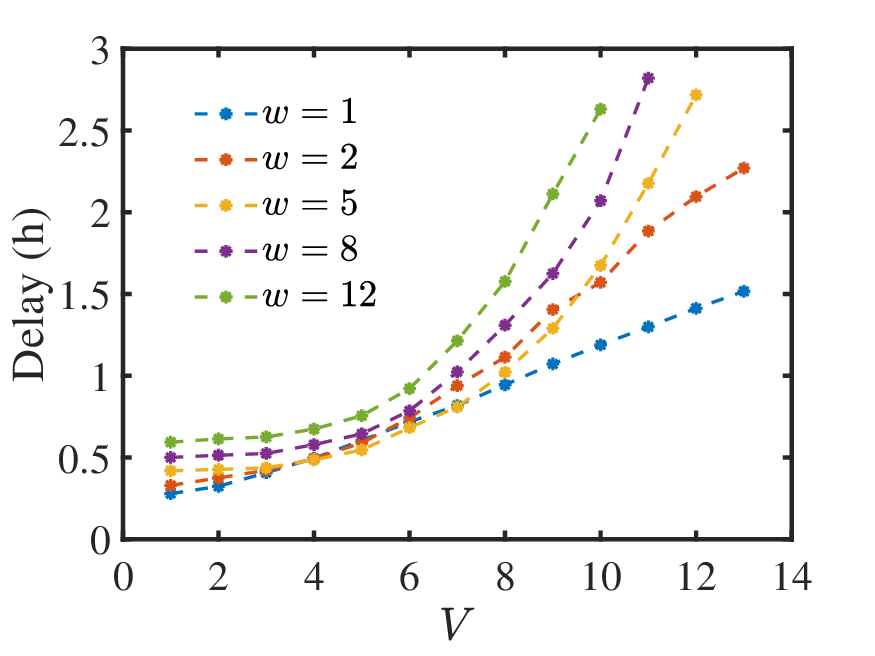}
      \label{fig:delay_mean_win_V}
    }
    \subfigure[]{
      \includegraphics[width=4.1cm]{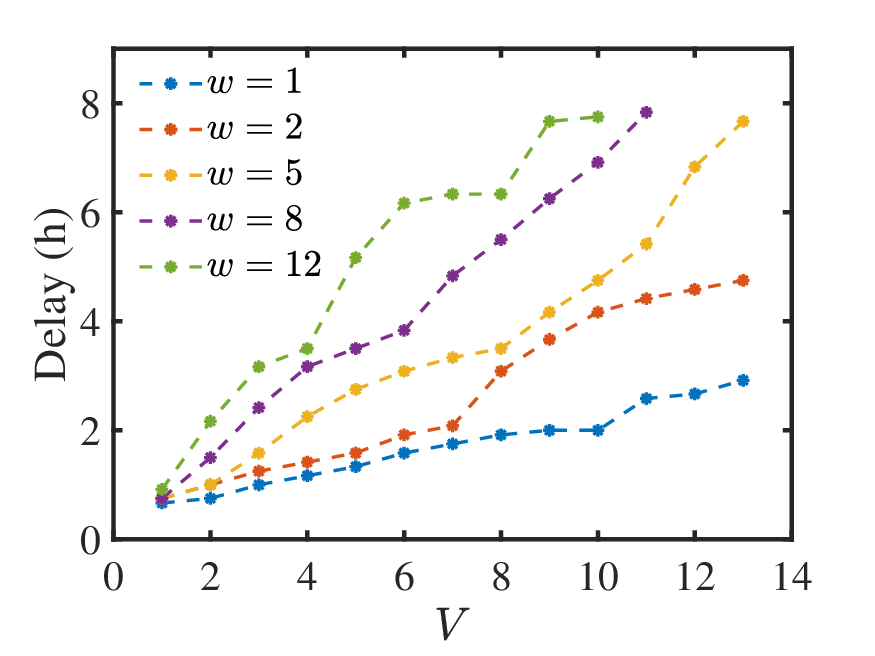}
      \label{fig:delay_max_win_V}
    }
    \caption{(a) Average charging delay against $V$ with each window length. (b) Maximum charging delay against $V$ with each window length.}
\end{figure}

The impact of the length of lookahead window $w$, the parameters $V$ and $\alpha$ on the performance of the proposed algorithm are investigated. The cost traces against different lengths of lookahead window with $V = 10$ are illustrated in Fig. \ref{fig:cost_tr_win_fixed_V}. It can be observed that the cost traces rise more smoothly with a larger length of lookahead window, i.e., the charging strategies become less greedy and more foresighted, which demonstrates the benefit of utilizing future information. 
The costs against different $V$ with each length of lookahead window are illustrated in Fig. \ref{fig:cost_win_V}. Results show that the cost can be reduced by increasing $V$, which is consistent with the theoretical analysis on the drift-plus-penalty algorithm in Section \ref{sec:lya}. The impact of window length on cost reduction is more complicated. Specifically, when $V$ is small, the cost reduction compared to standard Lyapunov optimazation is not significant since the stability of virtual queues is prioritized in this case.
However, when $V$ is large enough ($V > 8$ in Fig. \ref{fig:cost_win_V}), the cost under forecast-enhanced algorithm ($w = 5, 8, 12$) is much lower than that under no-forecast algorithm ($w = 1$), which demonstrates the advantage of utilizing future information in cost minimization.
Note that the results with $V > 11$ are not presented for $w = 8$ and $w = 12$ in Fig. \ref{fig:cost_win_V}, since the charging task may fail if $V$ is too large, especially when the lookahead window is large, which is consistent with the trade-off between cost minimization and charging delay revealed in Section \ref{sec:lya}.

The average and maximum charging delay against different $V$ with each length of lookahead window are illustrated in Fig. \ref{fig:delay_mean_win_V} and Fig. \ref{fig:delay_max_win_V} respectively. It can be observed that charging delay is increased in general as $V$ increases for each $w$ despite some fluctuations in maximum delay, since a larger $V$ puts more weight on cost minimization, leading to longer charging delay. A larger lookahead window also leads to an increase in charging delay, especially when a large $V$ is chosen. This is because a larger lookahead window allows the algorithm to better anticipate future low-price periods, which may result in deferring charging to those periods, thereby increasing the overall charging delay.

It is revealed in Fig. \ref{fig:cost_win_V}, Fig. \ref{fig:delay_mean_win_V} and Fig. \ref{fig:delay_max_win_V} that a proper lookahead window should be chosen to achieve desirable performance in terms of both efficiency and quality of service. While a larger lookahead window can significantly reduce cost, it may also result in increased charging delay. On the contrary, the benefit of forecasting can be limited if the window is too small (e.g., $w = 2$).
In addition, there exists an optimal $V$ in each case, which achieves the lowest cost while satisfying all charging demand, that is, all EVs have reached their target SOC before departure.
The optimal $V$ against different lengths of lookahead window are presented in Fig. \ref{fig:optimal_V_win}. It shows that the optimal $V$ increases as the length of lookahead window increases. Another insight from Fig. \ref{fig:cost_win_V} and Fig. \ref{fig:optimal_V_win} is that the introduction of future information can effectively simplify the tuning of parameter $V$, since the optimal $V$ does not vary significantly when the length of lookahead window is large enough (e.g., $w \ge 5$ in this case). In other words, any $V$ in a specific range (e.g., $V \in [8, 10]$ in this case) with a moderate length of lookahead window (e.g., $w = 5$ in this case)  achieves satisfactory performance. This is a valuable property in practice because it is usually difficult to precisely find the optimal parameters of an online algorithm in real-world applications.

In addition, the impact of parameter $\alpha$ on the performance of the proposed algorithm is investigated, as shown in Fig. \ref{fig:cost_win_alpha} and Fig. \ref{fig:delay_win_alpha}. Results show that cost is increased and delay is decreased as $\alpha$ is increased for each $w$, which is determined by the role of $\alpha$ in the dynamics of virtual queue $z_g(t)$ in \eqref{eq:zgt}. A larger $\alpha$ leads to higher penalty on charging delay, thus the algorithm tends to reduce the charging delay at the expense of higher cost.

\begin{figure}[t]
  \centering
  \includegraphics[width=5cm]{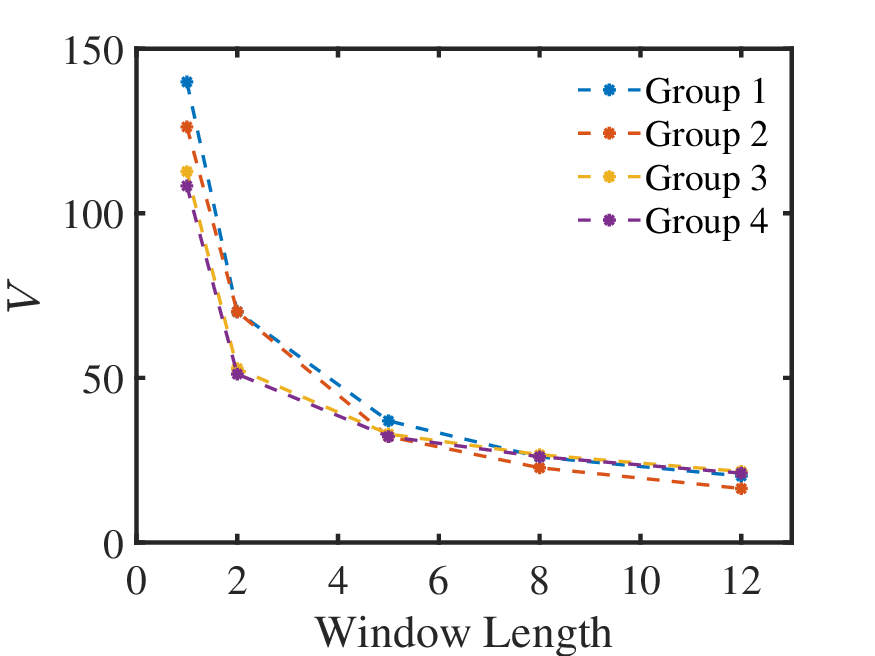}
  \caption{Optimal $V$ against the window length.}
  \label{fig:optimal_V_win}
\end{figure}

\begin{figure}[htbp]
    \centering
    \subfigure[]{
      \includegraphics[width=4.1cm]{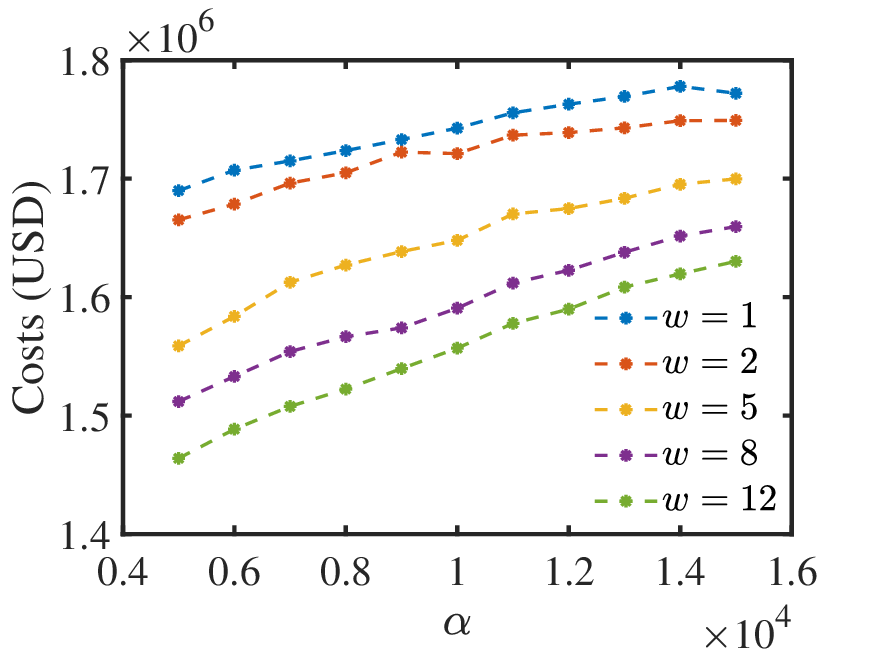}
      \label{fig:cost_win_alpha}
    }
    \subfigure[]{
      \includegraphics[width=4.1cm]{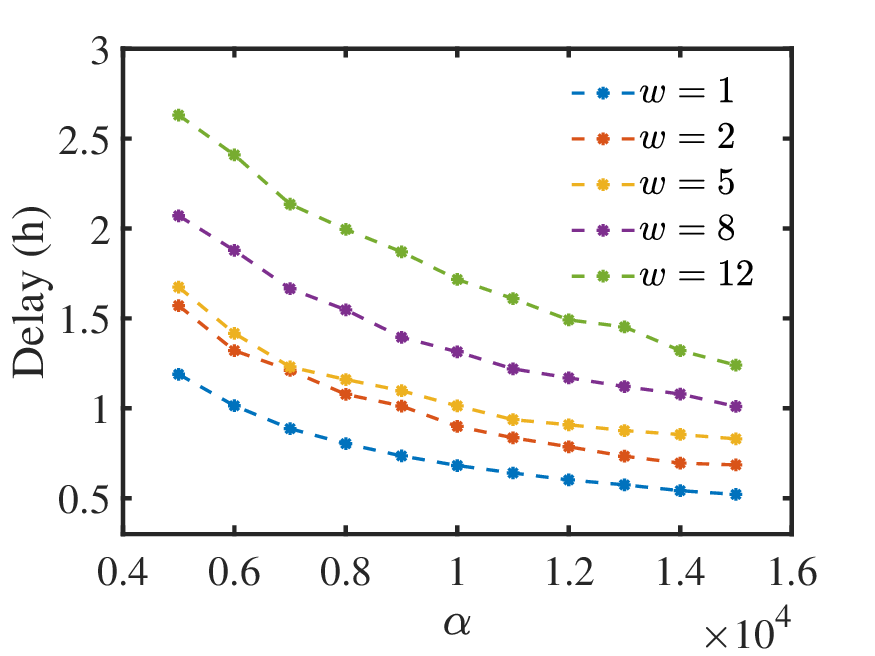}
      \label{fig:delay_win_alpha}
    }
    \caption{(a) The cost traces against $\alpha$ with each window length ($V = 10$). (b) The average charging delay against $\alpha$ with each window length ($V = 10$).}
\end{figure}

\subsection{Scalability}

 To evaluate the scalability of the proposed algorithm, various cases are tested with the number of EVs ranging from 50 to 200. The length of lookahead window is set to 12 slots and the average computational time of 50 tests is taken for each case. 
 As shown in Table \ref{tab:scal},  the increase in computational time is minimal as the number of groups increases, which demonstrates the good scalability of the proposed algorithm.
 Specifically,   the computational time is only increased by 1.2 s (6\%) from 50 EVs to 200 EVs. Therefore, the proposed algorithm is capable of real-time implementation for large-scale EV charging scenarios. 

% \begin{figure}[t]
%   \centering
%   \includegraphics[width=4.2cm]{Figs/scalability.eps}
%   \caption{Scalability test.}
%   \label{fig:scal}
% \end{figure}

\begin{table}[t]
    \centering
    \caption{Results of scalability test.} 
    \label{tab:scal}
    \begin{tabular}{ccccccccccc}
    \toprule 
    & Number of EVs & 50 & 100 & 150 & 200 \\
    \midrule 
    & Computational Time (s) & 21.6 & 22.3 & 22.2 & 22.8 \\
    \bottomrule
    \end{tabular} 
\end{table}

\subsection{Robustness to Prediction Error}

The robustness of the proposed algorithm to prediction error is investigated by testing the proposed algorithm with imperfect prediction within the lookahead window.
Specifically, the predicted price at each time slot is generated by adding  noise to the real electricity price, given by  $\tilde{\pi}(t) = (1+0.05Y) \pi(t)$, where $Y$ is a standard normal random variable. In this way, the mean absolute percentage error (MAPE) of the predicted price is 4\%, which is a typical value of  prediction error of prices in the literature \cite{OlivaresNeural2023}.
Results show that the proposed algorithm with a homogeneous penalty parameter (A2) is robust to prediction error, while the proposed algorithm with heterogeneous penalty parameters (A1) is more sensitive to prediction error, which may lead to unsatisfied charging demand in some cases. This is because each group has been pushed to its limit, i.e., the optimal $V$ respectively in the heterogeneous setting.
The results of 30 tests are illustrated in Fig. \ref{fig:error_test_cost} and Fig. \ref{fig:error_test_delay}. The average of all tests is taken for the costs in Fig. \ref{fig:error_test_cost}, and the average and the maximum of charging delay of all tests are taken for the delay in Fig. \ref{fig:error_test_delay}. It can be observed that the proposed algorithm can still achieve better performance  compared to MPC in terms of both cost and delay when prediction error is present, even with homogeneous penalty parameters. 

\begin{figure}[htbp]
    \centering
    \subfigure[]{
      \includegraphics[width=4.1cm]{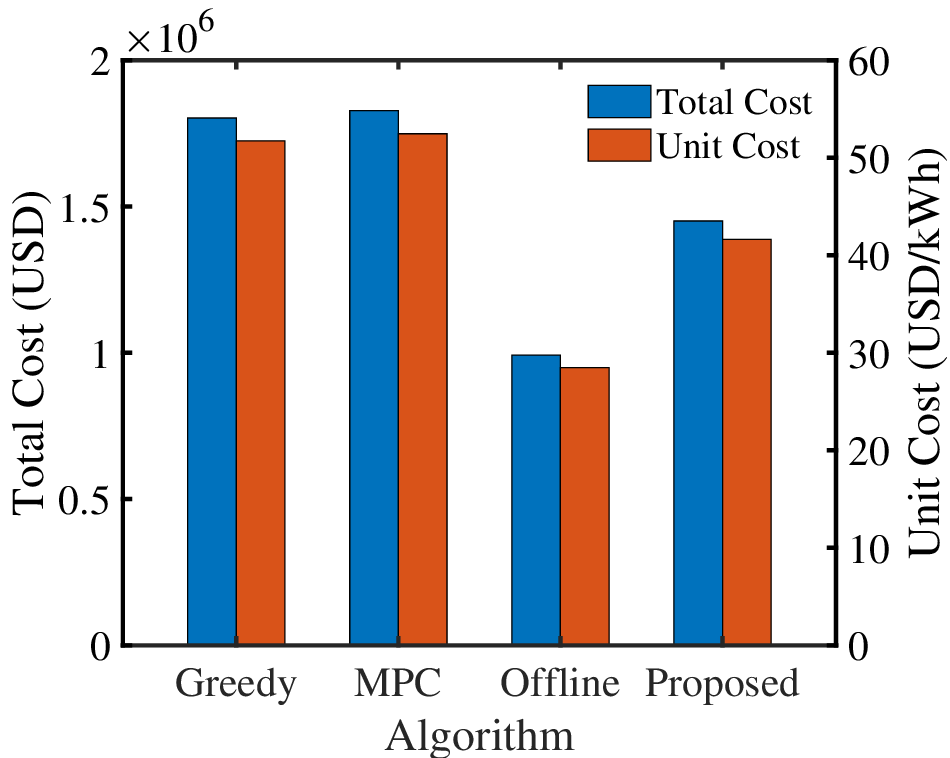}
      \label{fig:error_test_cost}
    }
    \subfigure[]{
      \includegraphics[width=4.1cm]{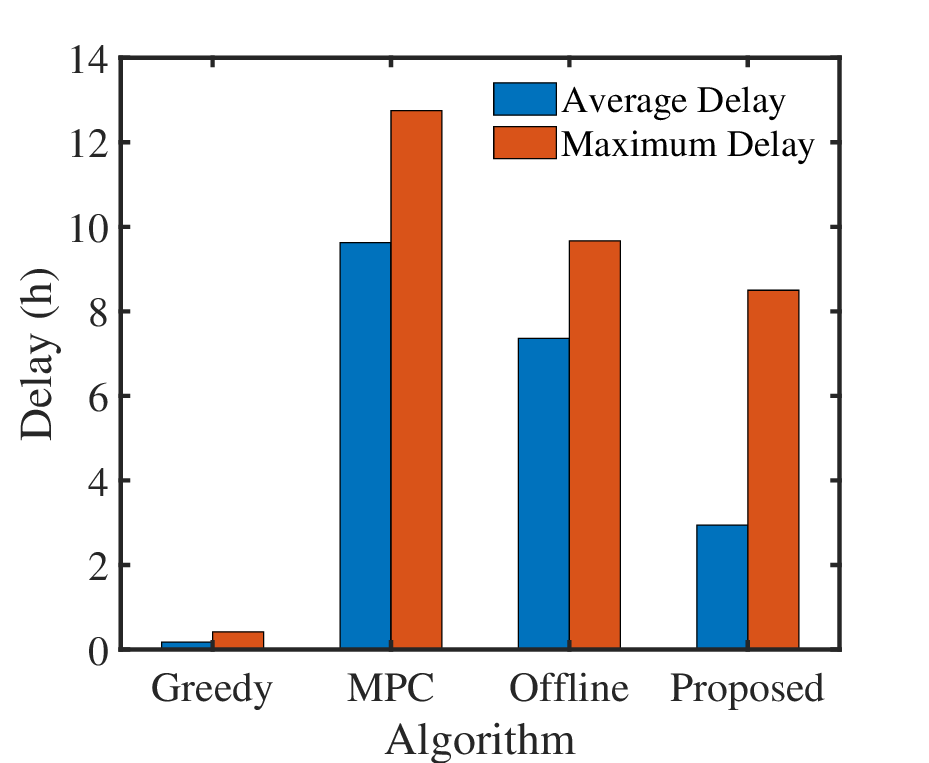}
      \label{fig:error_test_delay}
    }
    \caption{Robustness test of the proposed algorithm (A2). (a) Cost and unit cost of the proposed algorithm and benchmarks with prediction error. (b) Average and maximum charging delay of the proposed algorithm and benchmarks with prediction error.}
\end{figure}

\section{Conclusion}
\label{sec:conclu} 

In this paper, a novel forecast-enhanced Lyapunov optimization framework is proposed for real-time EV charging scheduling. By integrating future information into the Lyapunov optimization framework, the proposed algorithm can effectively reduce the total cost of EV charging while guaranteeing the quality of service. Theoretical bounds on both optimality and charging delay are derived. Moreover, an extension of the proposed framework with heterogeneous penalty parameters is presented.
Case studies show that the proposed algorithm outperforms benchmark algorithms significantly in terms of cost minimization and charging delay control. The introduction of heterogeneous penalty  can further improve the performance of the proposed algorithm.
Parametric studies reveal the impact of key parameters (e.g., the length of lookahead window, penalty parameter) on the performance of the proposed algorithm, which provide useful guidelines for parameter tuning in practice. The scalability test demonstrates the capability of the proposed algorithm for real-time implementation in large-scale EV charging scenarios.
Future work may focus on the impact of the prediction error on the performance of the proposed algorithm or extending the framework to more complicated scenarios, e.g., considering vehicle-to-grid services.

\section*{Acknowledgment}
The authors would like to thank Professor Steven H. Low of California Institute of Technology for his helpful suggestions and insightful comments on this work.

\bibliographystyle{IEEEtran}
\bibliography{ref}

\clearpage

\appendices \makeatletter \@addtoreset{equation}{section}
\@addtoreset{theorem}{section}
\makeatother
\setcounter{equation}{0}

\counterwithin{equation}{section}
\counterwithin{theorem}{section}

\section{Proof of Proposition \ref{prp:aggregation}}
\label{appd:aggregation}

\begin{proof}

We focus on a fixed group $g$ in the proof since \textbf{P1} and \textbf{P3} are decoupled across groups, and drop the subscript $g$ for simplicity.
For convenience, define the following variables for each EV $v$:
\begin{equation}
  \underline{h}_v \triangleq \frac{E_v^{\mathrm{req}}}{\eta \Delta t},
  \qquad
  \overline{h}_v \triangleq \frac{E_v^{\max}}{\eta \Delta t}.
\end{equation}
Then \eqref{ineq:energy_req} can be rewritten as:
\begin{equation}
  \label{ineq:energy_req_rewrite}
  \underline{h}_v \le \sum_{t=1}^T p_v(t) \le \overline{h}_v.
\end{equation}
\eqref{eq:agg_energy_lb} and \eqref{eq:agg_energy_ub} can be rewritten as:
\begin{subequations} 
\begin{align}
  \underline{X}^\mathcal{U} & = \sum_{v}  \max\Bigl\{0,\, \underline{h}_v - P_v |\mathcal{U}_v \setminus \mathcal{U}|\Bigr\}, \\
  \overline{X}^\mathcal{U} & = \sum_{v} \min\Bigl\{\overline{h}_v,\, P_v |\mathcal{U}_v \cap \mathcal{U}|\Bigr\}.
\end{align}
\end{subequations}

\textit{First, we show that every feasible solution to \textbf{P1} is also a feasible solution to \textbf{P3}.}

Assume $\{p_v(t)\}$ is feasible for P1.
Fix $\mathcal{U} \subseteq \{1,\dots,T\}$. For each $v$ in the group, we have
\begin{equation}
  \sum_{t \in \mathcal{U}} p_v(t) = \sum_{t \in \mathcal{U} \cap \mathcal{U}_v} p_v(t) \le P_v |\mathcal{U}_{v} \cap \mathcal{U}|,
\end{equation}
and
\begin{equation}
  \sum_{t \in \mathcal{U}} p_v(t) \le \sum_{t=1}^T p_v(t) \le \overline{h}_v.
\end{equation}
Summing over $v$ yields $  x(t) \le \overline{X}^\mathcal{U}$.

% \subsection*{Lower bound}

We have the following inequality for the sum of power outside $\mathcal{U}$:
\begin{equation}
  \sum_{t \notin \mathcal{U}} p_v(t) = \sum_{t \in \mathcal{U}_v \setminus \mathcal{U}} p_v(t) \le P_v |\mathcal{U}_v \setminus \mathcal{U}|.
\end{equation}
According to the first inequality of \eqref{ineq:energy_req_rewrite}, the sum inside $\mathcal{U}$ satisfies:
\begin{equation}
  \sum_{t \in \mathcal{U}} p_v(t) \ge \max\{0,\, \underline{h}_v - P_v |\mathcal{U}_v \setminus \mathcal{U}|\}.
\end{equation}
Summing over $v$ yields $  x(t) \ge \underline{X}^\mathcal{U}$.
Hence, every feasible solution to \textbf{P1} can be aggregated to a feasible solution for \textbf{P3}.

\textit{Next, we show that every feasible solution to \textbf{P3} is also a feasible solution to \textbf{P1}.}

Assume $x(t)$ satisfies all  constraints of \textbf{P3}. 
For any EV $v$, any subset $\mathcal{U} \in \{1,\dots,T\}$, and any subset of EVs $\mathcal{V}$, we have the trivial bounds
\begin{equation}
  \min \left\{\overline{h},\; P_v|\mathcal{U}_v\cap \mathcal{U}| \right\}  \le
  \begin{cases}
    \overline{h}, & v\in \mathcal{V},\\[4pt]
    P_v|\mathcal{U}_v\cap \mathcal{U}|, & v\notin \mathcal{V}.
  \end{cases}
\end{equation}
Summing over all $v$ in the group yields
\begin{equation}
  \label{ineq:ub_sum}
  \sum_{v} \min\{\overline{h},\; P_v|\mathcal{U}_v\cap \mathcal{U}|\} \le \sum_{v\in \mathcal{V}} \overline{h} + \sum_{v\notin \mathcal{V}} P_v|\mathcal{U}_v\cap \mathcal{U}|.
\end{equation}
Combining \eqref{ineq:ub_sum} with \eqref{ineq:agg_energy_bd} and \eqref{eq:agg_energy_ub} gives
\begin{equation}
  \label{ineq:ub_cut}
  \sum_{t\in \mathcal{U}} x(t) \le \sum_{v\in \mathcal{V}} \overline{h} + \sum_{v\notin \mathcal{V}} P_v|\mathcal{U}_v\cap \mathcal{U}|.
\end{equation}

Similarly, we have the following inequality by \eqref{ineq:agg_energy_bd} and \eqref{eq:agg_energy_lb}:
\begin{equation}
  \label{ineq:lb_cut}
  \sum_{t\in \mathcal{U}} x (t) \ge \sum_{v \notin \mathcal{V}} \left( \underline{h}_v -  P_v|\mathcal{U}_v\setminus \mathcal{U}| \right).
\end{equation}

Then we show that \eqref{ineq:ub_cut} and \eqref{ineq:lb_cut} imply the existence of a feasible solution to \textbf{P1} by a graph-theoretic approach.
Construct a digraph with nodes $s$,  $r$, time nodes $\{1, \ldots, T\}$ indexed by $t$ and EV nodes (all EVs in the group) indexed by $v$.
The edges are defined by:
\begin{itemize}
    \item $s \to t$ with fixed flow $f_{s,t} = x(t)$,
    \item $t \to v$  with flow $f_{t,v}$, $0 \le f_{t,v} \le P_v$ if $t \in \mathcal{U}_v$, and $f_{t,v} = 0$ otherwise,
    \item $v \to r$ with flow $f_{v,r}$, $\underline{h}_v \le f_{v,r} \le \overline{h}_v$,
    \item $r \to s$ with flow $f_{r,s} \ge 0$.
\end{itemize}
We only need to show that a feasible circulation exists with
\begin{equation}
  p_v(t) \triangleq f_{t,v}.
\end{equation}
If such a circulation exists, then  every feasible solution to \textbf{P3} can be decomposed into a feasible solution to \textbf{P1}. Specifically, the flow conservation at each node $t$ enforces 
\begin{equation}
  \sum_{v} p_v(t) = x (t).
\end{equation}
\eqref{ineq:power_bd} and \eqref{eq:power_mask} are enforced by the  capacity constraints on the edges $t \to v$.
The flow conservation at each node $v$ and  the  capacity constraints on the edges $v \to r$ enforce
\begin{equation}
  \sum_{t=1}^T p_v(t) = f_{v,r} \in \left[ \underline{h}_v, \overline{h}_v \right],
\end{equation}
which satisfies \eqref{ineq:energy_req}.

% \subsection*{Hoffman's Circulation Theorem}[;']

We prove the existence of a feasible circulation by Hoffman's circulation theorem, which states that a feasible circulation exists if and only if, for every vertex subset $\mathcal{W}$,
\begin{equation}
  \label{ineq:hoffman}
  \sum_{e \in \delta^-(\mathcal{W})} \ell_e \le \sum_{e \in \delta^+(\mathcal{W})} u_e,
\end{equation}
where $\delta^-(\mathcal{W})$ and $\delta^+(\mathcal{W})$ are the sets of edges entering and leaving $\mathcal{W}$ respectively, and $\ell_e$ and $u_e$ are the lower and upper bounds of edge $e$ respectively.
For this network, the nontrivial conditions reduce exactly to \eqref{ineq:ub_cut} and \eqref{ineq:lb_cut}. Specifically, if exactly one of $s$ and $r$ is in $\mathcal{W}$, then \eqref{ineq:hoffman} holds trivially. We only need to consider the cases where both $s$ and $r$ are in $\mathcal{W}$ or neither is in $\mathcal{W}$.

\noindent
\textbf{Case 1:} $s, r \notin \mathcal{W}$.
Let $\mathcal{U} \subseteq \{1, \ldots, T\}$ be a subset of time nodes and $\mathcal{V}$ a subset of EV nodes and take $\mathcal{W}  = \mathcal{U} \cup \mathcal{V}$.
Then the only edges with positive lower bounds entering $\mathcal{W}$ are $s \to t$ for $t \in \mathcal{U}$, contributing $\sum_{t \in \mathcal{U}} x(t)$, and the edges leaving $\mathcal{W}$ are:
\begin{itemize}
    \item $t \to v$ from $t \in \mathcal{U}$ to $v \notin \mathcal{V}$, contributing at most $\sum_{v \notin \mathcal{V}} P_v |\mathcal{U}_v \cap \mathcal{U}|$,
    \item $v \to r$ for $v \in \mathcal{V}$, contributing $\sum_{v \in \mathcal{V}} \overline{h}_v$.
\end{itemize}
Hence, the lower bound of the flow entering $\mathcal{W}$ and the upper bound of the flow leaving $\mathcal{W}$ are exactly the left-hand side and the right-hand side of \eqref{ineq:ub_cut} respectively, which satisfies \eqref{ineq:hoffman}.

\noindent
\textbf{Case 2:} $s, r \in \mathcal{W}$.
Let $\mathcal{U} \subseteq \{1, \ldots, T\}$, $\mathcal{V} \subseteq \mathcal{S}_g$, and take
$\mathcal{W} = \{s, r\} \cup (\{1, \ldots, T\} \setminus \mathcal{U}) \cup \mathcal{V}.$
Then the edges entering $\mathcal{W}$ with positive lower bounds are precisely $v \to r$ for $v \notin \mathcal{V}$, contributing $\sum_{v \notin \mathcal{V}} \underline{h}_v$, and the edges leaving $\mathcal{W}$ include:
\begin{itemize}
  \item  $s \to t$ for $t \in \mathcal{U}$, contributing $\sum_{t \in \mathcal{U}} x(t)$,
  \item  $t \to v$ from $t \notin \mathcal{U}$ to $v \notin \mathcal{V}$, contributing at most $\sum_{v \notin \mathcal{V}} P_v |\mathcal{U}_v \setminus \mathcal{U}|$.
\end{itemize}
Hence, the lower bound of the flow entering $\mathcal{W}$ and the upper bound of the flow leaving $\mathcal{W}$ are exactly the right-hand side and the left-hand side of \eqref{ineq:lb_cut} respectively, which satisfies \eqref{ineq:hoffman}.

To sum up, \eqref{ineq:hoffman} holds for every vertex subset $\mathcal{W}$, thus a feasible circulation exists, which completes the proof.
\end{proof}

\section{Proof of Proposition \ref{prp:delay}}
\label{appd:delay}

\begin{proof}
  Suppose Proposition \ref{prp:delay} does not hold, i.e., there  exists charging demand $a_{g}(t_{0})$ that cannot be served on or before  the time slot $t_{0}+ D_g$. 
  Since $q_{g}(t)$ and $z_{g}(t)$  are processed       by FIFO method, we have the following conditions:
  \begin{align}
    & q_{g}(t) > 0, ~ \forall t \in [t_{0}, t_{0}+ D_{g}-1],    \label{ineq:qgpos} \\
    & \sum_{t = t_0}^{t_0 + D_{g}-1}x_{g}(t) < Q_{g}. \label{ineq:xq_Qg}
  \end{align}
  Then \eqref{eq:zgt} can be simplified by \eqref{ineq:qgpos}:
  \begin{equation}
    \label{ineq:zgpos}
    z_{g}(t + w ) \ge z_{g}(t) + \frac{\alpha_{g}}{R_{g}} -    \sum_{\tau=0}^{w-1}    x_{g}(t+\tau), ~ \forall t \in [t_{0}, t_{0}+ D_{g}-1].
  \end{equation}
  Suppose $nw \le D_g < (n+1)w$, where $n$ is an integer. By \eqref{ineq:xq_Qg} and $x_g \ge 0$, we have
  \begin{equation}
    \label{ineq:xq_Qg_2}
    \sum_{t = t_0}^{t_0 + nw -1}x_{g}(t) \le \sum_{t = t_0}^{t_0 + D_{g}-1}x_{g}(t) < Q_{g}.
  \end{equation}
  Summing \eqref{ineq:zgpos} over $t_{0}, t_{0}+w, ..., t_{0}+nw$,
  we obtain
  \begin{equation}
    z_{g}(t_{0}+ nw) - z_{g}(t_{0}) \ge \frac{(n+1) \alpha_{g}}{R_{g}}    - \sum_{t = t_0}^{t_0 + nw -1}x_{g}(t).
  \end{equation}
  Obviously, $z_{g}(t) \ge 0, ~ t = 1, 2, \dots, T$. By \eqref{ineq:xq_Qg_2} we have
  \begin{align}
    Z_{g}\ge z_{g}(t_{0}+ nw) - z_{g}(t_{0}) >    \frac{(n+1) \alpha_{g}}{R_{g}}- Q_{g}.
  \end{align}
  Then $D_g$ can be bounded by
  \begin{align}
    D_{g} < (n+1) w < \frac{w R_{g}(Q_{g}+ Z_{g})}{\alpha_{g}},
  \end{align}
  which completes the proof.
\end{proof}

\section{Proof of Proposition \ref{prp:optimality}}
\label{appd:optimality}

\begin{proof}

Take $\hat{x}_{g}^{t}(t+\tau), ~ \tau = 0, \dots, w-1$ into the drift-plus-penalty term \eqref{obj:drift_penalty}, which can be bounded by \eqref{ineq:lya_drift_ub}, i.e.,
we have
\begin{equation}
\label{ineq:cmp_f1}
\begin{aligned}
  & \Delta(\boldsymbol{\Theta}(t)) + V \mathbb{E} \left[ \sum_{\tau=0}^{w-1}  \hat{f}^{(t)}(t+\tau) \right] \\
  \le ~ & B + V \mathbb{E} \left[ \sum_{\tau=0}^{w-1} \hat{f}^{(t)}(t+\tau) \right]         \\
  & - \sum_{g = 1}^{G}\left((q_{g}(t) +  z_{g}(t))\sum_{\tau=0}^{w-1} \hat{x}_{g}^{(t)}(t+\tau) \right) \\
  \le ~ & B + V \sum_{\tau=0}^{w-1}f^{*}(t+\tau)          \\
  & - \sum_{g = 1}^{G}\left((q_{g}(t) +  z_{g}(t))\sum_{\tau=0}^{w-1} x_{g}^{*}(t+\tau) \right),
\end{aligned}
\end{equation}
The second inequality of \eqref{ineq:cmp_f1} holds because $\hat{x}_{g}^{(t)}(t+\tau), ~ \tau = 0, \dots, w-1$ are the optimal solution of \textbf{P5}, whose objective function is  the left-hand side of the second inequality.
Replace $\Delta(\boldsymbol{\Theta}(t))$ by its definition \eqref{eq:LyaDrift}, and sum
\eqref{ineq:cmp_f1} over $t = 1, 2, \dots, T$, yielding
\begin{equation}
\label{ineq:cmp_f2}
\begin{aligned}
  & \mathbb{E} \left[ \sum_{\tau=0}^{w-1}L(\boldsymbol{\Theta}(T+1+\tau)) -    \sum_{\tau=0}^{w-1}L(\boldsymbol{\Theta}(1+\tau))   \right]  \\
  & + V \sum_{t=1}^{T} \mathbb{E}\left[ \sum_{\tau=0}^{w-1}\hat{f}^{(t-\tau)}(t)  \right]  \\
  & + V \mathbb{E} \left[ \sum_{t=T+1}^{T+w-1}      \sum_{\tau=t-T}^{w-1} \hat{f}^{(t-\tau)} (t) -  \sum_{t=1}^{w-1} \sum_{\tau=t}^{w-1} \hat{f}^{(t-\tau)} (t) \right] \\
  \le & ~ BT + w V \sum_{t = 1}^{T}f^{*}(t)  \\
  & + V \left( \sum_{t=T+1}^{T+w-1}      \sum_{\tau=t-T}^{w-1} f^* (t) - \sum_{t=1}^{w-1}  \sum_{\tau=t}^{w-1} f^* (t) \right) 
\end{aligned}
\end{equation}
Recall the assumption in Section \ref{subsec:drift_plus_penalty} that $\hat{f}^{(\tau)}(t) = 0$, $\forall \tau \le 0$, $\forall t$, thus they do not contribute to the sum in \eqref{ineq:cmp_f2}.
Divide \eqref{ineq:cmp_f2} by $w VT$ and let $T\to \infty$,
\begin{equation}
\begin{aligned}
  & f^{\text{ONL}}- f^{\text{OFL}}  \\
  =   & \lim_{T\to \infty}\frac{1}{T}\sum_{t=1}^{T}\left(    \mathbb{E}\left[      \frac{1}{w}\sum_{\tau=0}^{w-1}\hat{f}^{(t-\tau)}(t) \right] - f^{*}(t) \right) \\
  \le & \frac{B}{w V}.
\end{aligned}
\end{equation}
\end{proof}

\section{Bounding the Increment of Virtual Queues with Forecast}

In this appendix, we discuss the increment of backlogs of a queue $q(t)$ with $w$-slot lookahead. The update rule of $q(t)$ is given by:
\begin{equation}
  q(t+w)=\max\left\{q(t)-\sum_{s=0}^{w-1}x(t+s)+\sum_{s=0}^{w-1}a(t+s),\,0\right\}.
\end{equation}
And we assume $q(0)=q(1)=\dots=q(w-1)=0$.

\begin{lemma}
  \label{lem:search_bound}
  If $0\le a(t),x(t)\le X$ for all $t$, then for every integer $t\ge 0$ and every $\tau\in\{0,\dots,w-1\}$ with $t\ge \tau$, we have
  \begin{equation}
    \label{ineq:search_bound}
    q(t-\tau)\ge q(t)-wX.
  \end{equation}
  In particular, if $q(t)\ge M$, then
  \begin{equation}
    q(t-\tau)\ge M-wX.
  \end{equation}
\end{lemma}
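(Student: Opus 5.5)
The plan is to reduce the lemma to one block step of the recursion, combined with a case split on whether $t-\tau$ lies in the initial zero segment. The update rule only links $q(t)$ to $q(t-w)$, so a time $t-\tau$ with $1\le\tau\le w-1$ is not on the same residue chain as $t$. I therefore compare both $q(t)$ and $q(t-\tau)$ with an earlier value on the chain of $t$, rather than linking them directly.

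First I record the one-step increment bound on a single chain. For any $s\ge 0$, since $x\ge 0$ and $a\le X$,
\begin{equation*}
q(s+w)\le \max\Bigl\{q(s)+\sum_{k=0}^{w-1}a(s+k),\,0\Bigr\}\le q(s)+wX .
\end{equation*}
Symmetrically, since $a\ge0$ and $x\le X$, we have $q(s+w)\ge q(s)-wX$. The first inequality already gives the claim at $\tau=0$ trivially, and more generally says the queue can grow by at most $wX$ over one window.

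Next I handle the case $t-\tau\le w-1$. There $q(t-\tau)=0$ by the initial condition. Since $\tau\le w-1$, we get $t\le 2w-2$, so either $t\le w-1$ (giving $q(t)=0$) or $t=s+w$ with $0\le s\le w-2$ (giving $q(t)\le q(s)+wX=wX$). In both sub-cases $q(t)-wX\le 0=q(t-\tau)$.

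The main case is $t-\tau\ge w$, and this is where I expect the real obstacle. I would write $t=s+w$ and $t-\tau=s'+w$ with $s'=s-\tau\in[s-w+1,s]$. Then $q(t)\le q(s)+wX$, so it suffices to show $q(t-\tau)\ge q(s)$, or at least $q(t-\tau)\ge q(t)-wX$ by another route. The point $t-\tau$ lies in the window $(s,t]$, but the windows of different chains overlap without being nested, so a single-chain argument cannot close the gap. I would try to exploit the fact that the chains of $s$ and $s'$ see overlapping sums of $a-x$, and proceed by strong induction on $t$. The inductive hypothesis would be that, between chains offset by less than $w$, values differ by at most $wX$, and I would propagate this through one update of each chain using the bounds from the first step. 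If the constant doesn't close (it may degrade to $2wX$), I would check whether the lemma as used later only needs a weaker constant. I would also consider interpreting the recursion as the buffered implementation, where $q(t-\tau)$ and $q(t)$ come from consecutive solves that share $w-1$ common slots. In that interpretation the difference is a sum over at most $w$ slots of terms bounded by $X$, which gives exactly $wX$. The "in particular" statement then follows immediately by substituting $q(t)\ge M$.
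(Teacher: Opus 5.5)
Your preliminary steps are correct: the one-block bounds $q(s)-wX\le q(s+w)\le q(s)+wX$ and your treatment of the case $t-\tau\le w-1$. But they are the easy part. The main case $t-\tau\ge w$ is left unproved, and none of the routes you sketch closes it.

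\begin{itemize}
\item The reduction to $q(t-\tau)\ge q(t-w)$ is false, because the queue can drain. With $w=2$, $d=a-x$, $d(0)=d(1)=X$, $d(2)=-X$, you get $q(2)=2X$ but $q(3)=0$.
\item Your inductive hypothesis, that values on chains offset by less than $w$ differ by at most $wX$, is also false as a two-sided statement. With $w=3$, $d(0)=\dots=d(5)=X$, $d(6)=d(7)=-X$, one gets $q(6)=6X$ and $q(8)=2X$. The lemma is genuinely one-sided.
\item The one-sided version does not propagate through one update per chain. If $q(t)>0$ then $q(t)=q(t-w)+\sum_{i=t-w}^{t-1}d(i)$, while $q(t-\tau)\ge q(t-\tau-w)+\sum_{i=t-\tau-w}^{t-\tau-1}d(i)$. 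Invoking the hypothesis with constant $C$ for the pair $(t-w,\,t-w-\tau)$ leaves the residual $\sum_{i=t-w-\tau}^{t-w-1}d(i)-\sum_{i=t-\tau}^{t-1}d(i)\ge-2\tau X$. So the step only yields $C+2\tau X$ and closes for no fixed $C$. Using your crude block bounds instead is worse.
\item The buffered-implementation reinterpretation changes the object of the lemma, which is the $w$-step recursion with a zero initial segment.
\item Settling for a weaker constant would not prove the statement. It would also enlarge $Q_g$ in Proposition~\ref{prp:queue_bound}, whose Case~2 uses exactly the constant $wX$.
\end{itemize}

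What is missing is to unroll the recursion along the residue class of $t$ instead of inducting. Write $t=nw+r$ with $0\le r\le w-1$. The zero initial segment gives the Lindley-type representation
\[
q(t)=\max_{0\le k\le n}\sum_{i=kw+r}^{t-1}d(i),
\]
where $k=n$ gives the empty sum $0$. Let $u=t-\tau$ and take any $k\le n-1$. Split the candidate sum into three pieces:
\begin{itemize}
\item its first $w-\tau$ terms;
\item a middle piece from $kw+r+w-\tau$ to $u-1$, whose starting index is congruent to $u$ modulo $w$ and at most $u$;
\item its last $\tau$ terms $d(u),\dots,d(t-1)$.
\end{itemize}
The middle piece is one of the candidates in the representation of $q(u)$, hence at most $q(u)$. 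The two discarded pieces contain exactly $w$ terms, each at most $X$ because $d\le a\le X$. Thus every candidate is at most $q(u)+wX$, and maximizing over $k$ gives $q(t)\le q(t-\tau)+wX$.

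This is the paper's proof. Its split into $\tau\le r$ and $\tau>r$ only records whether $u$ falls in block $n$ or block $n-1$.
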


\begin{proof}
  Let $d(t)=a(t)-x(t)$, so that $|d(t)|\le X$ for all $t$. Then the recursion can be written as
  \begin{equation}
    q(t+w)=\left[q(t)+\sum_{s=0}^{w-1}d(t+s)\right]^+.
  \end{equation}
  Fix $t\ge 0$ and write $t=nw+r$ with $n\ge 0$ and $r\in\{0,\dots,w-1\}$. By iterating the recursion along residue class $r$, we obtain the representation
  \begin{equation}
    \label{eq:search_representation}
    q(t) = \max_{0\le k\le n}\sum_{i=kw+r}^{t-1} d(i).
  \end{equation}
  Now fix $\tau\in\{0,\dots,w-1\}$ with $t\ge \tau$, and let $u=t-\tau$. We will show that
  \begin{equation}
    \label{ineq:search_bound_alt}
    q(t) \le q(u)+wX,
  \end{equation}
  which implies the desired result.
  We proceed by discussing two cases.

  \noindent
  \textbf{Case 1:} $\tau\le r$.
  In this case, write $u=nw+s$ with $s=r-\tau$. For any $k\in\{0,\dots,n\}$, we decompose
  \begin{equation}
    \sum_{i=kw+r}^{t-1} d(i)    =    \sum_{i=(k+1)w+s}^{u-1} d(i)    +    \sum_{i=kw+r}^{(k+1)w+s-1} d(i)    +    \sum_{i=u}^{t-1} d(i).
  \end{equation}
  The first term is one of the candidates in the maximization formula for $q(u)$ according to \eqref{eq:search_representation}, $\forall k\in\{0,\dots,n-1\}$ . Hence, we have
  \begin{equation}
    \label{ineq:search_bound_case1}
    \sum_{i=(k+1)w+s}^{u-1} d(i) \le q(u), \quad \forall k\in\{0,\dots,n-1\}.
  \end{equation}
  Note that \eqref{ineq:search_bound_case1} holds trivially for $k=n$.
  The second term contains $w-\tau$ terms, and the third term contains $\tau$ terms, so together they contain exactly $w$ terms. Therefore,
  \begin{equation}
  \label{ineq:search_bound_interm}
    \sum_{i=kw+r}^{t-1} d(i) \le q(u)+wX.
  \end{equation}
  Taking the maximum over $k$ yields \eqref{ineq:search_bound_alt}.

  \noindent
  \textbf{Case 2:} $\tau>r$.
  In this case, write $u=(n-1)w+s$ with $n\ge 1$ and $s=w+r-\tau$. For any $k\in\{0,\dots,n\}$, we decompose
  \begin{equation}
    \sum_{i=kw+r}^{t-1} d(i)    =    \sum_{i=kw+s}^{u-1} d(i)    +    \sum_{i=kw+r}^{kw+s-1} d(i)
    +    \sum_{i=u}^{t-1} d(i).
  \end{equation}
  Again, the first term is a candidate for $q(u)$. The second term contains $w-\tau$ terms, and the third term contains $\tau$ terms, so together they contain exactly $w$ terms. Hence, \eqref{ineq:search_bound_interm} holds.
  Taking the maximum over $k$ gives \eqref{ineq:search_bound_alt}.
\end{proof}

\section{Proof of Proposition \ref{prp:queue_bound}}
\label{appd:queue_bound}

\begin{proof}
We prove the bounds by induction at window boundaries. Obviously, $q_g(1)$ and $z_g(1)$ satisfy the bounds.
Assume $q_g(t)\le Q_g$ for some $t$ and we need to show $q_g(t+w)\le Q_g$.
We proceed by discussing two cases.

\noindent
\textbf{Case 1:} $q_g(t) \le V\pi_{\max} + w X_g$.
Then regardless of $x_g$,
\begin{equation}       
    q_g(t+w)    \le    q_g(t) + \sum_{\tau=0}^{w-1} a_g(t+\tau)    \le    V\pi_{\max} + 2wX_g    =    Q_g.
\end{equation}

\noindent
\textbf{Case 2:} $V\pi_{\max} + w X_g < q_g(t) \le V\pi_{\max} + 2w X_g$. 
By Lemma \ref{lem:search_bound}, for any time slot between $t-w$ and $t$, $q_g(t)$ is also bounded below, i.e., 
\begin{equation}
    q_g(t-\tau) > V\pi_{\max}, \quad \tau=0,\dots,w-1.
\end{equation}
Since $z_g(t)\ge 0$, the optimal solutions of \textbf{P5} between $t-w$ and $t$ are given by KKT condition:
\begin{equation}
\label{eq:x_hat_up}
    \hat{x}_g^{(t-\tau)}(t) = X_g, \quad \tau=0,\dots,w-1.
\end{equation}
Hence
\begin{equation}
\label{eq:x_up}
    x_g (t) = \frac{1}{w}        \sum_{\tau=0}^{w-1}\hat{x}_{g}^{(t-\tau)}(t) = X_g.
\end{equation}
\eqref{eq:x_hat_up} and \eqref{eq:x_up} also apply to $x_g (t+1)$, $x_g (t+2)$, ..., $x_g (t+w-1)$, thus
\begin{equation}
    \sum_{\tau=0}^{w-1} x_g(t+\tau) = wX_g.
\end{equation}
Applying the queue update:
\begin{equation}
\begin{aligned}        
    q_g(t+w)   & =    \left[ q_g(t)-wX_g \right]^+    + \sum_{\tau=0}^{w-1} a_g(t+\tau) \\
& \le    V \pi_{\max}   + 2wX_g,
\end{aligned}
\end{equation}
which completes the induction for $q_g$.

The proof of $Z_g$ is similar to that of $Q_g$ and is thus omitted.
\end{proof}

\section{Proof of Proposition \ref{prp:optimality_alt}}
\label{appd:optimality_alt}

\begin{proof}
Replace the scalar $V$ in \eqref{ineq:cmp_f1} by $\boldsymbol{V} = (V_g)_{g = 1, \dots, G}$ and ignore the terms that can be reduced,
\begin{equation}
\label{ineq:drift_alt}
    \Delta(\Theta(t))     + \sum_{g=1}^G V_g \mathbb{E} \left[ \sum_{\tau=0}^{w-1} \hat f_g^{(t)}(t+\tau) \right]    \le    B + \sum_{g=1}^G V_g \sum_{\tau=0}^{w-1} f_g^{*}(t+\tau).
\end{equation}
Sum \eqref{ineq:drift_alt} over \(t=1,\dots,T\), divide by \(wT\), and let $T \to \infty$:
\begin{equation}
\begin{aligned}
     & V_{\min} \left( f^{\text{ONL}}- f^{\text{OFL}} \right) \\
     \le ~ & \sum_{g=1}^G V_g \left( \mathbb{E} \left[ \frac{1}{w} \sum_{\tau=0}^{w-1} \hat f_g^{(t)}(t+\tau) \right] - f_g^{*}(t+\tau) \right) \le \frac{B}{w},
\end{aligned}
\end{equation}
which completes the proof.
\end{proof}

\end{document}